\documentclass{amsart}

\usepackage{amsmath}
\usepackage{graphicx}
\usepackage{amssymb}
\usepackage{epstopdf}
\usepackage{tikz-cd}
\usepackage{mathrsfs}
\usepackage{xcolor}
\usepackage{hyperref}
\usepackage{tikz}
\usepackage{tikz-cd} 
\usetikzlibrary{cd}
\usepackage{framed}
\usepackage[T1]{fontenc}
\usepackage{bm}
\usepackage{comment}
\usepackage[normalem]{ulem}
\usepackage[left]{lineno}

\numberwithin{equation}{section}

\newtheorem{thm}{Theorem}[section]
\newtheorem{letterthm}{Theorem}

\newtheorem{letterconjecture}{Question}

\newtheorem{lem}[thm]{Lemma}
\newtheorem{prop}[thm]{Proposition}
\newtheorem{defn}[thm]{Definition}
\newtheorem{cor}[thm]{Corollary}

\newtheorem{remark}[thm]{Remark}

\begin{document}

\title[Derived Hecke algebras]
{Non-vanishing mod $p$ of derived Hecke algebras of multiplicative groups over number fields, II}
\author[Jaesung Kwon]{JAESUNG KWON${}^1$}
\thanks{${}^1$This work was supported by Basic Science Research Program 
through the National Research Foundation of Korea (NRF) 
funded by the Ministry of Education (RS-2025-00563144).
The author thank Dohyeong Kim for helpful discussions and comments.}
\email{js.kwon@unist.ac.kr}
\address{Ulsan National Institute of Science and Technology, 
50, UNIST-gil, Eonyang-eup, Ulju-gun, Ulsan, Republic of Korea, 44919}
\date{\today}

\subjclass[2020]{11F75, 20C08}
\keywords{}

\begin{abstract} 
We investigate the derived Hecke action on the cohomology of an arithmetic manifold 
associated to the multiplicative group over a number field. 
Under mild assumptions, we prove that the derived Hecke action is non-vanishing modulo $p$ in every degree.
Moreover, the total cohomology ring with $\mathbb F_p$-coefficients
is a free graded module of rank one over the derived Hecke algebra, generated in degree zero.
\end{abstract}
\maketitle
\numberwithin{equation}{section}
\setcounter{tocdepth}{1}
\tableofcontents

\section{Introduction}
Let $F$ be a number field and $\mathbf{G}$ a reductive algebraic group over $F$. 
Let $\mathbb{A}_F^{(\infty)}$ be the ring of finite adeles of $F$, 
$S_\infty$ the quotient of $\mathbf{G}(F\otimes_\mathbb{Q}\mathbb{R})$ 
by a maximal compact connected subgroup of itself, 
and $K$ an open compact subgroup of $\mathbf{G}(\mathbb{A}_F^{(\infty)})$. 
Then, the associated quotient
\begin{equation}\label{eq:am}
\mathbf{G}(F)\backslash \left(S_\infty\times\mathbf{G}(\mathbb{A}_F^{(\infty)})\right)/K
\end{equation}
is known to be homeomorphic to a finite union of locally symmetric spaces.

Spaces like \eqref{eq:am} are special in that their singular cohomology groups are modules over Hecke algebras.
We briefly recall their definition.
Let $R$ be a commutative ring with unity and $v$ a prime of $F$. Then, the {\it abstract Hecke algebra at $v$} is defined as
\begin{equation}\label{abstract:classcial:Hecke:algebra:def}
\mathcal{H}_v^0:=\mathrm{Hom}(R[\mathbf{G}(F_v)/K_v],R[\mathbf{G}(F_v)/K_v])
\end{equation}
where $F_v$ is the completion of $F$ at $v$ and $K_v$ is the $v$-component of $K$.
The multiplication on $\mathcal{H}_v^0$ is given by convolution and the $\mathrm{Hom}$-set is taken in a suitable category of $\mathbf{G}(F_v)$-modules.

Venkatesh \cite{venkatesh2016derived} introduced the derived version of \eqref{abstract:classcial:Hecke:algebra:def}:
\begin{defn}[Abstract derived Hecke algebra, Venkatesh {\cite[Definition 2.2]{venkatesh2016derived}}] 
For a prime $v$ of $F$, define
\begin{equation*}
\mathcal{H}_v^\bullet:=\mathrm{Ext}^\bullet_{C_{\rm c}(\mathbf{G}(F_v),R)}(R[\mathbf{G}(F_v)/K_v],R[\mathbf{G}(F_v)/K_v]),
\end{equation*}
where $C_{\rm c}(\mathbf{G}(F_v),R)$ is the ring of compactly supported locally constant $R$-valued functions on $\mathbf{G}(F_v)$.
\end{defn}

Furthermore, the graded algebra $\mathcal{H}_v^\bullet$ acts on the total cohomology space of \eqref{eq:am} 
graded by the cohomological degrees.
The {\it derived Hecke algebra} $\mathbb{T}_R^\bullet$ is defined to be the algebra 
generated by the graded endomorphisms induced by $\mathcal{H}_v^\bullet$ 
as $v$ ranges over all unramified primes) and $\mathcal{H}_v^0$
on the total cohomology $H^\bullet(Y_K,R)$, 
where $Y_K$ is the arithmetic manifold defined in \eqref{eq:am}. 

With the choice $\mathbf{G}:=\mathbb{G}_{\mathrm{m}/F}$, we describe the counterpart of \eqref{eq:am}.
Let $F_\mathbb{R}:=F\otimes_\mathbb{Q}\mathbb{R}$ be the real Minkowski space over $F$. Let $F_{\mathbb{R},+}^\times$ be the connected component of $1\in F_\mathbb{R}^\times$. 
Put
\begin{equation*}
U:=\{x=(x_\tau)_\tau\in F^\times_{\mathbb{R},+}: |x_\tau|=1\text{ for any infinite places }\tau\text{ of }F\},
\end{equation*}
which is the maximal compact connected subgroup of $F_{\mathbb{R},+}^\times$.
For an integral ideal $\mathfrak{N}$ of $F$, put
\begin{equation*}
K(\mathfrak{N}):=\left(\prod_{v\mid\mathfrak{N}}1+\mathfrak{N}O_v\right)\times\prod_{v\nmid\mathfrak{N}} O_v^\times
\end{equation*} 
and define
\begin{align}\label{eq:torus}
Y(\mathfrak{N}):=F^\times\backslash\left((F_\mathbb{R}^\times/U)\times\mathbb{A}_F^{(\infty),\times}\right)/K(\mathfrak{N}).
\end{align}
We will call $Y(\mathfrak N)$ an \emph{arithmetic torus}.
By~\cite[Proposition 3.6]{KimKwon2024}, $\mathbb{T}^\bullet$ is a graded commutative $\mathbb T^0$-algebra.
So we may consider the following map of graded $\mathbb T^\bullet$-modules 
\begin{align}\label{eq:psi}
\Psi_R^\bullet:\mathbb{T}^\bullet\otimes_{\mathbb{T}^0} H^0(Y(\mathfrak{N}),R)
\longrightarrow H^\bullet(Y(\mathfrak{N}),R),\quad h\otimes c\mapsto hc,
\end{align}
which is induced by the action of the derived Hecke algebra.
The conjecture of Prasanna--Venkatesh 
\cite[\textsection\,5.4, Main Conjecture]{prasanna2021automorphic} motivates
studying when $\Psi_R^\bullet$ is an isomorphism.

We specialize to finite-field coefficients.
Let $p$ be a rational prime. 
Denote by $\mathbb{T}^\bullet=\mathbb{T}^\bullet_{\mathbb{F}_p}$ 
the derived Hecke algebra of $Y(\mathfrak{N})$ with coefficients in $\mathbb{F}_p$.
As mentioned earlier, $\mathbb{T}^\bullet$ acts on the cohomology ring 
$H^\bullet(Y(\mathfrak{N}),\mathbb{F}_p)$. 
Write $\Psi^\bullet=\Psi_{\mathbb F_p}^\bullet$ 
for the specialization of \eqref{eq:psi} to $R=\mathbb F_p$.

Let $\mathcal{O}_F$ be the ring of integers of $F$. 
Put 
\[
E(\mathfrak{N}):=F^\times\cap F_{\mathbb{R},+}^\times\cap K(\mathfrak{N}),
\]
which is the subgroup of totally positive elements of $\mathcal{O}_F^\times$ 
congruent to $1$ modulo $\mathfrak{N}$.
From now on, we assume that $E(\mathfrak{N})$ is torsion free, 
which holds if the norm of $\mathfrak{N}$ is sufficiently large.
Under this assumption, Kim and Kwon \cite[Theorem A]{KimKwon2024} 
proved that $\Psi^1$ is an isomorphism of $\mathbb T^0$-modules for all but finitely many $p$.
Motivated by this degree one result, we propose the following mod $p$ analogue of the Prasanna--Venkatesh conjecture:
\begin{letterconjecture}\label{conj:pv:finite}
For all but finitely many $p$,
the map $\Psi^\bullet$ in \eqref{eq:psi} is an isomorphism of graded $\mathbb{T}^\bullet$-modules.
\end{letterconjecture}
This holds automatically when the higher degree cohomology groups are vanishing.
Therefore, we will assume from now on that $F$ is neither $\mathbb Q$ nor an imaginary quadratic field.
In this paper, we answer Question~\ref{conj:pv:finite} by extending the result of \cite{KimKwon2024}:

\begin{letterthm}\label{thm:A}
If $p$ does not divide the order of $\mathcal{O}_F^\times/E(\mathfrak{N})$, 
then $\Psi^\bullet$ is an isomorphism of $\mathbb{T}^\bullet$-modules.
\end{letterthm}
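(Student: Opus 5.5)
We will exploit the explicit topology of the arithmetic torus. The space $Y(\mathfrak N)$ is a disjoint union, indexed by the ray class group $\mathrm{Cl}(\mathfrak N)$ (narrow version), of copies of the real torus $E(\mathfrak N)\backslash (F_{\mathbb R,+}^\times/U)\cong E(\mathfrak N)\backslash\mathbb R^{r}$, where $r=r_1+r_2-1$ is the unit rank. Since $E(\mathfrak N)$ is torsion free of rank $r$, each component $Y_c$ is homotopy equivalent to $(S^1)^r$. Hence
\[
H^\bullet(Y(\mathfrak N),\mathbb F_p)\cong \bigoplus_{c} \textstyle\bigwedge^\bullet \mathrm{Hom}(E(\mathfrak N),\mathbb F_p),
\]
and $H^0(Y(\mathfrak N),\mathbb F_p)=\mathbb F_p[\mathrm{Cl}(\mathfrak N)]^\vee$ is a free rank-one module over $\mathbb T^0$, which is generated by the Hecke operators $T_v$ acting by translation on components. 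In particular $H^\bullet$ is a free graded module over $H^0$ with fiber the exterior algebra $\bigwedge^\bullet H^1(E(\mathfrak N),\mathbb F_p)$, and cup product is compatible with this description.

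The first step is to compute the local derived Hecke algebra for $\mathbb G_m$ at an unramified $v$. Since $F_v^\times/O_v^\times\cong\mathbb Z$, we get $\mathcal H_v^\bullet\cong \mathbb F_p[\mathbb Z]\otimes H^\bullet(O_v^\times,\mathbb F_p)$. In degree one this is governed by $\mathrm{Hom}(O_v^\times,\mathbb F_p)=\mathrm{Hom}(k_v^\times,\mathbb F_p)$, which is nonzero when $N v\equiv1\bmod p$. Following \cite{KimKwon2024}, a degree-one class at $v$ acts on $H^0$ by translation by $[\varpi_v]$ composed with the pullback of a class in $H^1(E(\mathfrak N),\mathbb F_p)$. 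This class is the image of the map $E(\mathfrak N)\to O_v^\times\to k_v^\times\to\mathbb F_p$. Theorem A of \cite{KimKwon2024} states exactly that these images span $H^1$ over $\mathbb T^0$, i.e.\ that $\Psi^1$ is an isomorphism. We will reprove or directly invoke it under the hypothesis $p\nmid[\mathcal O_F^\times:E(\mathfrak N)]$. That hypothesis ensures that $E(\mathfrak N)/p\to\mathcal O_F^\times/p$ (mod torsion) is injective. By Chebotarev applied to $F(\mu_p,\mathcal O_F^{\times 1/p})$, the reduction maps $E(\mathfrak N)\to k_v^\times/p$ over varying $v$ then separate all of $\mathrm{Hom}(E(\mathfrak N),\mathbb F_p)$.

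Next we pass to higher degrees using multiplicativity. We will show that the action of $\mathbb T^\bullet$ on $H^\bullet$ is given by cup product with classes in $H^\bullet(Y,\mathbb F_p)$, i.e.\ that $\mathbb T^\bullet$ embeds in $H^\bullet(Y,\mathbb F_p)$ acting on itself. This follows from the local description: a product of degree-one operators at distinct primes $v_1,\dots,v_k$ acts as translation by $\prod[\varpi_{v_i}]$ composed with cup product of the corresponding degree-one classes. Consequently $\Psi^\bullet$ is injective: $\mathbb T^\bullet\to H^\bullet$, $h\mapsto h\cdot 1$, is injective because the action is by left multiplication in a ring with unit, and $H^0$ is free of rank one over $\mathbb T^0$. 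For surjectivity, since $H^\bullet$ is generated as an algebra over $H^0$ by $H^1$ (exterior algebra), and $H^1$ lies in the image of $\mathbb T^1$ by the degree-one result, every $\wedge^k$ class is a $\mathbb T^0$-combination of products $t_1\cdots t_k\cdot 1$ with $t_i\in\mathbb T^1$. Translations can be absorbed using invertibility of the $T_v$ in $\mathbb T^0$.

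The main obstacle will be the compatibility in the higher-degree step. We must verify that the composite of derived Hecke operators at different primes agrees, component by component, with the cup product of the associated $H^1$-classes, up to a unit and translation. This requires tracking the explicit formula for the $\mathcal H_v^1$-action (pull back to the level-$K_0(v)$ cover, cup with the class from $k_v^\times$, push forward) and checking that these correspondences commute with cup product. We will first try to prove it by noting that every covering involved is a disjoint union of translates of the same torus, so push-pull reduces to a translation. This would make each derived operator literally $\tau_v\circ(\cup\,\alpha_v)$ with $\alpha_v\in H^1$ locally constant across components.
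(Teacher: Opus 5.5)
Your proposal is correct and is essentially the paper's argument. Surjectivity comes from the degree-one spanning statement $t_p=r$ (your Kummer--Chebotarev argument is what the paper imports as \cite[Lemma 4.6]{KimKwon2024}), propagated to all exterior powers by multiplicativity; the paper packages this as $\dim_{\mathbb F_p}\operatorname{Im}(\Psi^\bullet)=h_F^+(\mathfrak N)\,d_p$ with $d_p\ge 2^{t_p}=2^r$. The compatibility you flag as the main obstacle is immediate for $\mathbb G_{\mathrm m}$: the relevant covers have degree one, so there is no genuine push--pull, and $\iota_a^*\langle\alpha\rangle=i_{v,\mathfrak N}^*(\alpha)$ on every component (Proposition~\ref{computation:hecke:operator}).

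Only your justification of injectivity needs fixing. $\mathbb T^\bullet$ does not act on $H^\bullet(Y(\mathfrak N),\mathbb F_p)$ by cup products: translations permute components, and $h_{z,1}-1$ kills the unit $\sum_a 1_a$. So the unital ring you want is $\mathbb F_p[\mathrm{Cl}_F^+(\mathfrak N)]\otimes H^\bullet(E(\mathfrak N),\mathbb F_p)$, whose regular module is $H^\bullet(Y(\mathfrak N),\mathbb F_p)$ with $1_1$ as the unit; this is precisely the content of Propositions~\ref{hecke:action:faithful} and~\ref{injectivity:psi}.
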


Note that the assumption on $p$ in Theorem~\ref{thm:A} fails for only finitely many primes. 

Next, we prove the spectral degeneracy for general degree 
which extends the result of Kim--Kwon (\cite[Theorem 4.9]{KimKwon2024}):

\begin{letterthm}\label{thm:B}
If $p$ does not divide the order of $\mathcal{O}_F^\times/E(\mathfrak{N})$, 
then for an integer $j>0$, 
a Hecke eigensystem occurs in $H^j(Y(\mathfrak{N}),\overline{\mathbb{F}}_p)$ 
if and only if it occurs in $H^0(Y(\mathfrak{N}),\overline{\mathbb{F}}_p)$.
\end{letterthm}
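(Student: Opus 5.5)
Theorem~\ref{thm:B} should follow from Theorem~\ref{thm:A} together with the structure of $Y(\mathfrak N)$ as a disjoint union of tori. The idea is that $\Psi^\bullet$ identifies $H^j$ with $\mathbb T^j\otimes_{\mathbb T^0}H^0$, so every Hecke eigensystem in degree $j$ comes from one in degree $0$. For the converse we need that $H^j$ is nonzero and faithful enough over the relevant components.

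First I would describe the geometry. The space $Y(\mathfrak N)$ is a disjoint union, indexed by the narrow ray class group $\mathrm{Cl}^+(\mathfrak N)$, of copies of the real torus $E(\mathfrak N)\backslash F^\times_{\mathbb R,+}/U$. This torus has dimension $r=r_1+r_2-1$, with each component contractible modulo $E(\mathfrak N)\cong\mathbb Z^{r}$. Hence
\[
H^j(Y(\mathfrak N),\overline{\mathbb F}_p)\cong \mathrm{Map}(\mathrm{Cl}^+(\mathfrak N),\overline{\mathbb F}_p)\otimes\textstyle\bigwedge^j \mathrm{Hom}(E(\mathfrak N),\overline{\mathbb F}_p),
\]
and $H^0$ is the regular representation of $\mathrm{Cl}^+(\mathfrak N)$. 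The classical Hecke operators $T_v$ act on the first factor by translation by the class of $v$. Because the translations act trivially on the cohomology of each torus (all components are translates of the same torus), this isomorphism is $\mathbb T^0$-equivariant, with $\mathbb T^0$ acting trivially on the exterior factor. Consequently, as a $\mathbb T^0$-module, $H^j\cong (H^0)^{\oplus\binom{r}{j}}$.

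For the "only if" direction, a Hecke eigensystem is a character $\lambda:\mathbb T^0\to\overline{\mathbb F}_p$ with nonzero $\lambda$-eigenvector. The system-of-eigenvalues support of $(H^0)^{\oplus m}$ equals that of $H^0$, provided $m=\binom rj>0$, which holds for $0<j\le r$. Beyond degree $r$ both sides are vacuous for $j$, so the statement is read for $0<j\le r$; there $H^j\neq0$ because $F$ is neither $\mathbb Q$ nor imaginary quadratic. This already gives both directions without using $\mathbb T^\bullet$.

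If the eigensystem is meant for the full commutative algebra $\mathbb T^\bullet$, I would instead invoke Theorem~\ref{thm:A}. It gives $H^j\cong\mathbb T^j\otimes_{\mathbb T^0}H^0$, a quotient of $(\mathbb T^j)\otimes H^0$, so any eigensystem in $H^j$ restricts to one in $H^0$. For the converse, $H^0$ is free of rank one over $\mathbb T^0$ (its $p\nmid$ hypothesis ensures the group algebra decomposition behaves), and $\mathbb T^j$ is a free $\mathbb T^0$-module of rank $\binom rj$ by the isomorphism above. Then localizing at the maximal ideal $\mathfrak m_\lambda$ of $\mathbb T^0$ gives $(H^j)_{\mathfrak m}\cong (\mathbb T^j)_{\mathfrak m}\otimes (H^0)_{\mathfrak m}\ne0$ whenever $(H^0)_{\mathfrak m}\neq0$. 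Nonvanishing of the localization then yields an eigenvector by commutativity and finite dimensionality.

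The main obstacle is checking the $\mathbb T^0$-equivariance of the Künneth-type decomposition, that is, that Hecke translations between components induce the identity on torus cohomology under the chosen identifications. The identifications differ by elements of $F^\times_{\mathbb R,+}$, which act on the torus by translation and hence homotopically trivially. I would verify this by writing $T_v$ as the map induced by right multiplication by a uniformizer on the adelic double coset.
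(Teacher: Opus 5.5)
Your main argument is correct, and it reaches Theorem~\ref{thm:B} by a genuinely different, more elementary route than the paper.

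Under the identification $c\mapsto(\iota_a^*c)_{a\in C(\mathfrak N)}$ of Proposition~\ref{torus:copy}, equation~\eqref{eq:hecke} gives $\iota_a^*(h_{z,1}c)=\iota_{z(a)}^*(c)$ once one knows that $\iota_{za}^*=\iota_{z(a)}^*$. This is exactly the point you flag, and your justification is right: changing the representative of a narrow ray class changes $\iota_a$ by translation by an element of $F^\times_{\mathbb R,+}$, which is homotopic to the identity. Since $\mathbb T^0=\sum_z\mathbb F_p h_{z,1}$ (Proposition~\ref{Hecke:module:structure}), it follows that $H^j\cong H^0\otimes_{\overline{\mathbb F}_p}H^j(E(\mathfrak N),\overline{\mathbb F}_p)\cong (H^0)^{\oplus\binom rj}$ as $\mathbb T^0$-modules. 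This gives both directions at once for $0<j\le r$.

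The paper argues differently. It proves the direction from $H^0$ to $H^j$ through Theorem~\ref{spectral:degeneracy}: the Grunwald--Wang input of Lemma~\ref{dimension:lemma} supplies $j$ linearly independent classes $i_{v_i,\mathfrak N}^*(\alpha_i)$, hence a nonzero class $h_{1,\underline\alpha}c\in H^j[\lambda]$. The converse comes from Corollary~\ref{cor:4.10}, i.e. the localized Theorem~\ref{thm:A}.

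Your route uses neither ingredient. In particular, it shows that the hypothesis $p\nmid|\mathcal O_F^\times/E(\mathfrak N)|$ is not needed for Theorem~\ref{thm:B} as stated. What the paper's route buys is the stronger statement that the degree-$j$ eigenclasses are produced from degree zero by derived Hecke operators, which is the real content of the paper.

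Two small corrections:
\begin{itemize}
\item For $j>r$ the two sides are not ``both vacuous''. Here $H^j=0$ while $H^0\neq0$, so the equivalence actually fails, and the theorem must be read for $0<j\le r$. The paper does the same implicitly, since Theorem~\ref{spectral:degeneracy} covers only $j\le r_p-\delta_p=r$.
\item Your paragraph on $\mathbb T^\bullet$-eigensystems can be dropped. The eigensystems here are characters of $\mathbb T^0$, since positive-degree operators shift degree. Also, freeness of $H^0$ over $\mathbb T^0$ (Proposition~\ref{hecke:zero:free}) holds for every $p$, not because of the hypothesis on $p$.
\end{itemize}
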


Finally, we give the following description of the $\mathbb{T}^\bullet$-module structure of the cohomology ring
$H^\bullet(Y(\mathfrak{N}),\mathbb{F}_p)$:

\begin{letterthm}\label{thm:C}
If $p$ does not divide the order of $\mathcal{O}_F^\times/E(\mathfrak{N})$, then $H^\bullet(Y(\mathfrak{N}),\mathbb{F}_p)$ is a free graded $\mathbb{T}^\bullet$-module of rank one, generated in degree zero.
\end{letterthm}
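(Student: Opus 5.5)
Theorem~\ref{thm:C} should follow from Theorem~\ref{thm:A} once we know that $H^0(Y(\mathfrak N),\mathbb F_p)$ is free of rank one over $\mathbb T^0$ and that $\mathbb T^\bullet$ acts faithfully. The plan is to first describe $Y(\mathfrak N)$ and its cohomology explicitly. The space $Y(\mathfrak N)$ is a disjoint union, indexed by the narrow ray class group $C:=\mathrm{Cl}^+(\mathfrak N)$, of copies of the compact torus $E(\mathfrak N)\backslash F^\times_{\mathbb R,+}/U\cong (\mathbb R/\mathbb Z)^{r}$, where $r=\mathrm{rank}\,E(\mathfrak N)=r_1+r_2-1$. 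Hence
\[
H^\bullet(Y(\mathfrak N),\mathbb F_p)\cong \mathbb F_p[C]^\vee\otimes \textstyle\bigwedge^\bullet \mathrm{Hom}(E(\mathfrak N),\mathbb F_p),
\]
with $H^0=\mathrm{Map}(C,\mathbb F_p)$. The classical Hecke operators $T_v$ for $v\nmid\mathfrak N$ act on $H^0$ through translation by the class of $v$. By Chebotarev every class of $C$ arises this way, so $\mathbb T^0$ is the image of $\mathbb F_p[C]$ acting by translation on $\mathrm{Map}(C,\mathbb F_p)$. That action is the regular representation, hence faithful, so $\mathbb T^0\cong\mathbb F_p[C]$. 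Moreover $H^0$ is free of rank one over $\mathbb T^0$, generated by the indicator function $\delta_1$ of the identity class.

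Next, given this, Theorem~\ref{thm:A} says that
\[
\Psi^\bullet:\mathbb T^\bullet\otimes_{\mathbb T^0}H^0\to H^\bullet
\]
is an isomorphism. Since $H^0\cong\mathbb T^0$ as a $\mathbb T^0$-module, the source is $\mathbb T^\bullet\otimes_{\mathbb T^0}\mathbb T^0=\mathbb T^\bullet$. Consequently the map $\mathbb T^\bullet\to H^\bullet$, $h\mapsto h\delta_1$, is an isomorphism of graded $\mathbb T^\bullet$-modules. This is precisely the statement that $H^\bullet$ is free of rank one over $\mathbb T^\bullet$, generated by $\delta_1$ in degree zero. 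Injectivity is also automatic from faithfulness: $\mathbb T^\bullet$ is by definition a subalgebra of $\mathrm{End}(H^\bullet)$. Indeed, if $h\delta_1=0$, then by graded commutativity and the $\mathbb T^0$-action $h$ kills $\mathbb T^0\delta_1=H^0$. Surjectivity of $\Psi^\bullet$ then shows $h$ kills all of $H^\bullet$, so $h=0$. The only genuine input from Theorem~\ref{thm:A} is therefore surjectivity, i.e.\ that $H^\bullet$ is generated by degree-zero classes under $\mathbb T^\bullet$.

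The main obstacle is the identification $\mathbb T^0\cong\mathbb F_p[C]$ with $H^0$ free of rank one. One must check that the $\mathbb T^0$-action on $H^0$ factors through $C$ with no extra normalization at $p$. The unramified Hecke operator at $v$ is the pullback along translation by a uniformizer, which is invertible, so this should be safe. One must also handle generation of $\mathbb T^0$, where the Chebotarev density theorem applied to the narrow ray class field of conductor $\mathfrak N$ suffices. The hypothesis $p\nmid[\mathcal O_F^\times:E(\mathfrak N)]$ enters only through Theorem~\ref{thm:A}. If one wants the statement without invoking the full strength of Theorem~\ref{thm:A}, the alternative is to show directly that the derived operators at $v$ act as cup product with classes in $H^1$. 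Their images span $\mathrm{Hom}(E(\mathfrak N),\mathbb F_p)$, and exterior products of these then generate $H^\bullet$ over $H^0$. However, that route is exactly the content of Theorem~\ref{thm:A}, so I would simply cite it.
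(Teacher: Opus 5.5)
Your argument is correct and is the paper's own: Theorem~\ref{thm:A}, combined with the freeness of $H^0(Y(\mathfrak N),\mathbb F_p)$ over $\mathbb T^0$ on the degree-zero class (Proposition~\ref{hecke:zero:free}, which you re-derive via $\mathbb T^0\cong\mathbb F_p[\mathrm{Cl}_F^+(\mathfrak N)]$ acting by the regular representation), identifies the source of $\Psi^\bullet$ with $\mathbb T^\bullet$, so $h\mapsto h1_1$ is a graded isomorphism $\mathbb T^\bullet\cong H^\bullet(Y(\mathfrak N),\mathbb F_p)$. The one slip is cosmetic: each component $E(\mathfrak N)\backslash F^\times_{\mathbb R,+}/U$ is $\mathbb R\times(\mathbb R/\mathbb Z)^r$, which is only homotopy equivalent (not homeomorphic) to the $r$-torus, but that is all your cohomology computation uses.
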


The key ingredients are the formula of defived Hecke operators 
(Proposition~\ref{computation:hecke:operator:product}), 
the injectivity of $\Psi_R^\bullet$ (Proposition~\ref{injectivity:psi}),
dimension formulas (Proposition~\ref{dimension:prop} and Lemma~\ref{dimension:tp:dp}), 
and the result of the previous work~\cite{KimKwon2024} for degree one case.

To conclude the introduction, we mention a variant \cite{Harris2019} of the aforementioned conjecture \cite[\textsection\,5.4 Main Conj.]{prasanna2021automorphic}, also known as the weight one form case, where units in number fields play a central role. Its compatibility with the Stark conjecture is established in \cite{zhang2023}. See \cite{Darmon2022} and \cite{Zhang2024} for its resolution in the case of most dihedral forms.

\subsection{Comparison with previous work}

Note that in \cite[Proposition 4.2]{KimKwon2024}, injectivity was proved only for
the degree-one map $\Psi_{\mathbb F_p}^1$ with coefficients in $\mathbb F_p$.
In the present paper, we prove that $\Psi_R^\bullet$ is injective in all degrees 
for arbitrary coefficient rings $R$ (Proposition~\ref{injectivity:psi}).
The proof uses the injectivity of the restriction map to the cohomology group of degree zero 
(Proposition~\ref{hecke:action:faithful}), together with the module structures of $H^0(Y(\mathfrak N),R)$,
$\mathbb T_R^0$, and $\mathbb T_R^\bullet$ 
(Prosposition~\ref{Hecke:module:structure} and~\ref{hecke:zero:free}).

We also strengthen the formulation of spectral degeneracy in
\cite[Theorem 4.9]{KimKwon2024}.
Under the hypothesis of Theorem~\ref{thm:A}, for each Hecke eigensystem occurring in
$H^0(Y(\mathfrak N),\overline{\mathbb F}_p)$, the map induced by $\Psi^j$ on its eigenspace
is nonzero for some degree $j$ in which the cohomology is nonzero
(Theorem~\ref{spectral:degeneracy}).
This is stronger than merely asserting that the eigensystem also occurs in degree $j$:
the derived Hecke action produces a nonzero class in that eigenspace from degree zero.

We correct an error in the statement of \cite[Proposition 3.7 (1)]{KimKwon2024}, 
which asserts that $H^0(Y(\mathfrak N),R)=\mathbb T_R^01_1$. 
Here, $1_1$ is a cohomology class of degree zero, 
which is represented by the characteristic function of the identity component of $Y(\mathfrak N)$.
In \emph{loc. cit.}, $\mathbb T^0_R$ is generated by operators at unramified primes $v$ for which $N(v)$ is invertible in $R$.
For $R=\mathbb Z$, this algebra consists only of scalar-multiplication operators, 
so the assertion fails whenever $Y(\mathfrak N)$ has more than one connected component.
Therefore, in this paper, 
we define $\mathbb T^0_R$ to be the $R$-algebra generated by the degree zero Hecke operators at every unramified primes $v$.
With this definition, we can show that $H^0(Y(\mathfrak N),R)=\mathbb T_R^01_1$.
Furthermore, we show that $H^0(Y(\mathfrak N),R)$ is freely generated by $1_1$ over $\mathbb T_R^0$
(Proposition~\ref{hecke:zero:free}).

\subsection{Outline of the paper}

In \textsection \ref{section:torus}, we recall
some basic properties of $Y(\mathfrak{N})$. For example, we show that our arithmetic torus $Y(\mathfrak N)$
is homotopy equivalent to a disjoint union of $r$-tori, where $r$ is the rank of the unit group of $\mathcal{O}_F$.

In \textsection \ref{section:hecke}, we recall necessary definitions and results from \cite{venkatesh2016derived}. 
In particular, explicit descriptions of our derived Hecke operators are given, 
which will be used in later sections. Also we prove the injectivity of $\Psi_R^\bullet$.

In \textsection \ref{section:nonvanishing}, we specialize our interest to the case of $R=\mathbb{F}_p$ 
and investigate the map $\Psi^\bullet$. At the end of the section, 
Theorems~\ref{thm:A},~\ref{thm:B} and~\ref{thm:C} will be proved.

\section{Arithmetic torus}\label{section:torus}
Recall from \eqref{eq:torus} that $Y(\mathfrak N)$ denotes the arithmetic torus 
associated to an integral ideal $\mathfrak N$. Here we set up basic notation related to $Y(\mathfrak N)$.
Let $r_1$ and $r_2$ be the number of real and complex places of $F$, respectively. 
Let $r$ be the rank of $\mathcal{O}_F^\times$, which is equal to $r_1+r_2-1$ by the Dirichlet unit theorem.
Recall that
$E(\mathfrak{N})=F^\times\cap F_{\mathbb{R},+}^\times\cap K(\mathfrak{N})$.
From now on, we assume that $E(\mathfrak{N})$ is torsion free.

\begin{prop}\label{property:torus}
The space $E(\mathfrak{N})\backslash F_{\mathbb{R},+}^\times/U$ 
is a model for the classifying space of $E(\mathfrak{N})$.
\end{prop}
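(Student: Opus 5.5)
The plan is to show that $X:=F_{\mathbb{R},+}^\times/U$ is contractible and that $E(\mathfrak{N})$ acts on it freely and properly discontinuously. Then the quotient map $X\to E(\mathfrak{N})\backslash X$ is a covering map with contractible total space and deck group $E(\mathfrak{N})$. Hence the quotient is a $K(E(\mathfrak{N}),1)$, i.e. a model for $BE(\mathfrak{N})$.

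First, I will identify $X$ explicitly. We have $F_{\mathbb{R},+}^\times\cong(\mathbb{R}_{>0})^{r_1}\times(\mathbb{C}^\times)^{r_2}$, and $U$ is the subgroup $\{1\}^{r_1}\times(S^1)^{r_2}$. The map $x\mapsto(\log|x_\tau|)_\tau$, with $\tau$ running over the $r_1+r_2$ infinite places, is a continuous surjective homomorphism onto $\mathbb{R}^{r_1+r_2}$ with kernel exactly $U$. Polar decomposition of each $\mathbb{C}^\times$ factor shows the induced map $X\to\mathbb{R}^{r_1+r_2}$ is a homeomorphism of Lie groups. So $X$ is contractible.

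Second, I will examine the action of $E(\mathfrak{N})$. It acts on $X$ by multiplication through its diagonal embedding into $F_{\mathbb{R},+}^\times$; total positivity guarantees it lands there. Under the logarithm map, this action becomes translation by the image of $E(\mathfrak{N})$ under the Dirichlet log embedding $\ell:\mathcal{O}_F^\times\to\mathbb{R}^{r_1+r_2}$ (with factors $2$ at complex places if one prefers; this is harmless). The kernel of $\ell$ on $\mathcal{O}_F^\times$ is the group of roots of unity. Since $E(\mathfrak{N})$ is torsion free, $\ell$ restricted to $E(\mathfrak{N})$ is injective. Dirichlet's unit theorem says $\ell(\mathcal{O}_F^\times)$ is a lattice in the trace-zero hyperplane, hence discrete in $\mathbb{R}^{r_1+r_2}$. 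The subgroup $\ell(E(\mathfrak{N}))$ is then also discrete; it has finite index, though finiteness of the index is not even needed. A discrete subgroup of $\mathbb{R}^{n}$ acting by translations acts freely and properly discontinuously. Thus $X\to E(\mathfrak{N})\backslash X$ is a covering with group $E(\mathfrak{N})\cong\mathbb{Z}^r$.

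Concluding, the long exact sequence of homotopy groups for this covering gives $\pi_1\cong E(\mathfrak{N})$ and trivial higher homotopy groups. Concretely, $E(\mathfrak{N})\backslash X\cong \mathbb{R}\times (\mathbb{R}^{r}/\Lambda)$ with $\Lambda$ a rank-$r$ lattice, which is homotopy equivalent to an $r$-torus. The only step needing care is the injectivity and discreteness of $\ell|_{E(\mathfrak{N})}$. This is exactly where the torsion-freeness hypothesis enters, because a nontrivial root of unity would fix every point of $X$ and break freeness.
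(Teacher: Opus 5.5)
Your proof is correct and follows essentially the same route as the paper. Both arguments use the logarithmic (regulator) map, together with torsion-freeness and Dirichlet's unit theorem, to identify $E(\mathfrak{N})\backslash F_{\mathbb{R},+}^\times/U$ with $\mathbb{R}$ times an $r$-torus; the paper cites Kim--Kwon for this and reads off asphericity, whereas you supply the details via the covering $F_{\mathbb{R},+}^\times/U\to E(\mathfrak{N})\backslash F_{\mathbb{R},+}^\times/U$ with contractible total space, which has the small advantage of exhibiting the universal principal $E(\mathfrak{N})$-bundle directly.
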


\begin{proof}
By the proof of \cite[Proposition 2.1 (3)]{KimKwon2024}, the regulator map identifies
the space $E(\mathfrak{N})\backslash F_{\mathbb{R},+}^\times/U$ 
with the product of $\mathbb R$ and an $r$-torus whose first homotopy group is isomorphic to $E(\mathfrak N)$. 
Therefore $E(\mathfrak{N})\backslash F_{\mathbb{R},+}^\times/U$ is a $K(E(\mathfrak{N}),1)$.
As $E(\mathfrak{N})$ is discrete, this proves the assertion.
\end{proof}

Let $\mathbb{A}_F$ be the adele ring of $F$. For $a\in \mathbb{A}_F^\times$, let us denote by $a_\infty$ and $a^{(\infty)}$ the Archimedean and the non-Archimedean part of $a$, respectively.
For $a\in\mathbb{A}_F^{\times}$, let us define a map $\iota_a:E(\mathfrak{N})\backslash F_{\mathbb{R},+}^\times/U\to Y(\mathfrak{N})$ by
\begin{equation*}
\iota_a \colon x\longmapsto (a_\infty x,a^{(\infty)}).
\end{equation*}
For each $a\in\mathbb{A}_F^{\times}$, $\iota_a$ is a homeomorphism onto its image 
({\cite[Proposition 2.2]{KimKwon2024}}).

Let us denote by $\mathrm{Cl}_F^+(\mathfrak{N})$ the idelic narrow ray class group of $F$ of modulus 
$\mathfrak{N}$, which is given by
\begin{equation*}
\mathrm{Cl}_F^+(\mathfrak{N}):=F^\times\backslash\mathbb{A}_F^\times/\left(F_{\mathbb{R},+}^\times\times K(\mathfrak{N})\right).
\end{equation*}
Let us denote by $Y(\mathfrak{N})^\circ$ the image of $\iota_1$, 
which is equipped with the group structure and is homotopy equivalent to an $r$-torus. 
It is a standard fact that the map
\begin{equation*}
Y(\mathfrak{N})\longrightarrow\mathrm{Cl}_F^+(\mathfrak{N}),\quad (x,a)\longmapsto ax
\end{equation*}
is a principal $Y(\mathfrak{N})^\circ$-bundle, 
where the space $\mathrm{Cl}_F^+(\mathfrak{N})$ is equipped with the discrete topology.
Furthermore, the fibre of the bundle over an ideal class $[a]\in\mathrm{Cl}_F^+(\mathfrak{N})$ 
is given by the image of the map $\iota_a$ 
(\cite[Proposition 2.3]{KimKwon2024}). From this, we obtain the following:

\begin{prop}\label{torus:copy}
The arithmetic torus $Y(\mathfrak{N})$ is homeomorphic to the $\mathrm{Cl}_F^+(\mathfrak{N})$-disjoint union of 
$E(\mathfrak{N})\backslash F^\times_{\mathbb{R},+}/U$ 
under the map $\bigsqcup_{[a]\in\mathrm{Cl}_F^+(\mathfrak{N})}\iota_a$.
\end{prop}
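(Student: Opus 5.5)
The plan is to show that the map
\[
\Phi:=\bigsqcup_{[a]\in\mathrm{Cl}_F^+(\mathfrak{N})}\iota_{a}\colon \bigsqcup_{[a]} E(\mathfrak{N})\backslash F^\times_{\mathbb{R},+}/U\longrightarrow Y(\mathfrak{N})
\]
is a continuous bijection that is open, hence a homeomorphism. First fix, for each class in $\mathrm{Cl}_F^+(\mathfrak{N})$, a representative idele $a$. Each $\iota_a$ is well defined: replacing $x$ by $\varepsilon x$ with $\varepsilon\in E(\mathfrak{N})$ gives $(a_\infty\varepsilon x,a^{(\infty)})$. Since $\varepsilon$ is a global unit lying in $K(\mathfrak{N})$, we have
\[
(a_\infty\varepsilon x,a^{(\infty)})=\varepsilon\cdot(a_\infty x,\varepsilon^{-1}a^{(\infty)}),
\]
and $\varepsilon^{-1}a^{(\infty)}\equiv a^{(\infty)}$ modulo $K(\mathfrak{N})$, so both points have the same image in $Y(\mathfrak{N})$. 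By \cite[Proposition 2.2]{KimKwon2024}, each $\iota_a$ is a homeomorphism onto its image.

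Next I use the principal bundle $\pi\colon Y(\mathfrak{N})\to\mathrm{Cl}_F^+(\mathfrak{N})$, $(x,a)\mapsto ax$, together with the fact recalled above that the fibre over $[a]$ equals $\operatorname{im}\iota_a$. Every point of $Y(\mathfrak{N})$ lies in exactly one fibre, so $Y(\mathfrak{N})$ is the disjoint union of the images $\operatorname{im}\iota_a$ as $[a]$ runs over the classes. Together with the injectivity of each $\iota_a$, this shows that $\Phi$ is a bijection. One should check that $\pi\circ\iota_a$ is constantly $[a]$. This holds because $a_\infty x\cdot a^{(\infty)}=a\cdot x$ with $x\in F^\times_{\mathbb{R},+}$, so the class does not change.

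Finally, I address the topology. Since $\mathrm{Cl}_F^+(\mathfrak{N})$ is finite and discrete and $\pi$ is continuous, each fibre $\pi^{-1}([a])$ is both open and closed in $Y(\mathfrak{N})$. Hence $Y(\mathfrak{N})$ is the topological disjoint union of these fibres. On each piece, $\iota_a$ is a homeomorphism onto the fibre, so $\Phi$ is a homeomorphism.

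The only delicate point is continuity of $\pi$ with respect to the discrete topology, that is, that the fibres are open. I would deduce this from the fact that $F^\times_{\mathbb{R},+}\times K(\mathfrak{N})$ is an open subgroup of $\mathbb{A}_F^\times$, so the quotient topology on $\mathrm{Cl}_F^+(\mathfrak{N})$ is discrete.
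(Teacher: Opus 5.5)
Your proposal is correct and follows the paper's route. Both arguments decompose $Y(\mathfrak{N})$ into the fibres of $(x,a)\mapsto ax$ over the discrete group $\mathrm{Cl}_F^+(\mathfrak{N})$, identify the fibre over $[a]$ with $\operatorname{im}\iota_a$ via \cite[Proposition 2.3]{KimKwon2024}, and use that each $\iota_a$ is a homeomorphism onto its image \cite[Proposition 2.2]{KimKwon2024}; your added checks (well-definedness of $\iota_a$, $\pi\circ\iota_a\equiv[a]$, and clopen fibres because $F^\times_{\mathbb{R},+}\times K(\mathfrak{N})$ is open in $\mathbb{A}_F^\times$) make explicit what the paper leaves implicit.
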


\begin{lem}\label{representative:choice} 
The image of $\iota_a$ is independent of the choice of a representative of a ray class 
$[a]\in\mathrm{Cl}_F^+(\mathfrak{N})$.
\end{lem}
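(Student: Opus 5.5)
The plan is to compare the images of $\iota_a$ and $\iota_b$ directly when $[a]=[b]$ in $\mathrm{Cl}_F^+(\mathfrak{N})$. If $[a]=[b]$, then by definition of the narrow ray class group we may write
\[
b=\gamma\, a\, u_\infty k, \qquad \gamma\in F^\times,\ u_\infty\in F_{\mathbb{R},+}^\times,\ k\in K(\mathfrak{N}),
\]
regarded as an element of $\mathbb{A}_F^\times=F_\mathbb{R}^\times\times\mathbb{A}_F^{(\infty),\times}$. Splitting into components gives $b_\infty=\gamma a_\infty u_\infty$ and $b^{(\infty)}=\gamma a^{(\infty)}k$, where $\gamma$ is embedded diagonally in both factors.

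Next I compute $\iota_b(x)$ for $x\in F_{\mathbb{R},+}^\times$. In $Y(\mathfrak{N})$ we have
\[
\iota_b(x)=(\gamma a_\infty u_\infty x,\ \gamma a^{(\infty)}k)=(a_\infty u_\infty x,\ a^{(\infty)}k)=(a_\infty (u_\infty x),\ a^{(\infty)})=\iota_a(u_\infty x).
\]
The second equality uses the left action of $F^\times$ (diagonally), and the third uses the right action of $K(\mathfrak{N})$ on the finite part. Here the product is commutative, so the order of the factors causes no trouble. Since $x\mapsto u_\infty x$ is a bijection of $F_{\mathbb{R},+}^\times$, it descends to a bijection of $E(\mathfrak{N})\backslash F_{\mathbb{R},+}^\times/U$, because it commutes with the actions of $E(\mathfrak{N})$ and $U$. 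Hence $\mathrm{im}(\iota_b)\subseteq \mathrm{im}(\iota_a)$, and by symmetry the two images are equal.

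The only points needing care are that $\iota_a$ is well defined on the double quotient and that the classes of $F^\times_\mathbb{R}/U$ are handled consistently. For well-definedness, if $\epsilon\in E(\mathfrak{N})$, then $(a_\infty\epsilon x,a^{(\infty)})=(\epsilon a_\infty x,\epsilon a^{(\infty)}\epsilon^{-1})$. Since $\epsilon\in K(\mathfrak{N})$, the factor $\epsilon^{-1}$ on the finite part is absorbed by $K(\mathfrak{N})$ and the factor $\epsilon$ by the left $F^\times$-action, so the point is unchanged. Invariance under $U$ is immediate because $U$ is quotiented out. I do not expect any genuine obstacle; the key step is simply writing the equivalence $[a]=[b]$ componentwise.
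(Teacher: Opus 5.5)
Your proof is correct and is the same approach as the paper's, which simply says the lemma is immediate from the definitions of $\iota_a$ and $Y(\mathfrak{N})$. You write $b=\gamma a u_\infty k$, absorb $\gamma$ by the diagonal $F^\times$-action and $k$ by $K(\mathfrak{N})$ to get $\iota_b(x)=\iota_a(u_\infty x)$, and note that translation by $u_\infty$ is a bijection of $E(\mathfrak{N})\backslash F_{\mathbb{R},+}^\times/U$; this is exactly the routine check the paper leaves implicit.
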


\begin{proof}
Immediate from the definition of $\iota_a$ and $Y(\mathfrak{N})$.
\end{proof}

\section{Derived Hecke algebra of multiplicative group over number field}\label{section:hecke}
In this section, let us define the derived Hecke algebra and its action on the cohomology of torus $Y(\mathfrak{N})$. To do so, we follow the explicit description by Venkatesh \cite[\textsection 2]{venkatesh2016derived}.

Let $R$ be a ring.
For a prime $v$ of $F$, let $F_v$ be the $v$-adic completion of $F$, $O_v$ the valuation ring on $F_v$, 
$N(v)$ the absolute norm of $v$, and $\kappa_v$ the residue field of $v$. 

\subsection{Classical Hecke operator}
Let $v$ be a prime of $F$ coprime to $\mathfrak N$.
For $z\in F_v^\times/O_v^\times$, let $z:Y(\mathfrak{N})\rightarrow Y(\mathfrak{N})$ 
be the map induced by the multiplication $a\mapsto za$ with $z$. From this, we obtain the pullback map 
\begin{equation*}
z^*:H^\bullet(Y(\mathfrak{N}),R)\longrightarrow H^\bullet(Y(\mathfrak{N}),R).
\end{equation*}
Then the classical Hecke operator $h_{z,1}$ is defined by 
\[
h_{z,1}c:=z^*c\quad\text{for}\quad c\in H^\bullet(Y(\mathfrak N),R).
\]
Note that the map $F_v^\times/O_v^\times\to\mathrm{Aut}_RH^\bullet(Y(\mathfrak{N}),R)$ 
defined by $z\mapsto h_{z,1}$ is a group homomorphism.
From the equation $z\circ\iota_a=\iota_{za}$, we obtain the following formula:
\begin{equation}\label{eq:hecke}
\iota_a^*(h_{z,1}c)=\iota_{za}^*(c).    
\end{equation}

\subsection{Derived Hecke operator}
Let $v$ be a prime ideal of $F$ coprime to $\mathfrak{N}$.
We have the following canonical projection:
\begin{equation*}
\pi_{v,\mathfrak{N}}:Y(v\mathfrak{N})\longrightarrow Y(\mathfrak{N}),
\end{equation*}
which is a principal $\kappa_v^\times$-bundle, where the action of $k\in\kappa_v^\times$ on 
$(x,a)\in Y(v\mathfrak{N})$ is given by $k\cdot(x,a)=(x,ak^{-1})$. 
For a discrete group $G$, let $\mathcal{B}G$ be a classifying space of $G$.
By its universal property, $\pi_{v,\mathfrak{N}}$ corresponds to a map 
\begin{align}\label{eq:3.1}
\phi_{v,\mathfrak{N}}:Y(\mathfrak{N})\longrightarrow\mathcal{B}\kappa_v^\times,
\end{align}
which is determined uniquely up to homotopy. Assume further that $N(v)$ of $v$ is invertible in $R$.
Then the inflation map
\begin{equation*}
H^\bullet(\kappa_v^\times,R)\longrightarrow H^\bullet(O_v^\times,R)
\end{equation*}
is clearly an isomorphism. Thus, from now on, for $\alpha\in H^\bullet(O_v^\times,R)$, we denote by $\alpha\in H^\bullet(\kappa_v^\times,R)$ the image of $\alpha\in H^\bullet(O_v^\times,R)$ under the inverse of the inflation map by abusing the notation.
Let us denote by $\langle\cdot\rangle$ the composition of the following maps:
\begin{equation}\label{pullback:homotopy}
\begin{tikzcd}
H^\bullet(O_v^\times,R) \arrow{r}{\cong} &  H^\bullet(\kappa_v^\times,R) \arrow{r}{\cong} &  H^\bullet(\mathcal{B}\kappa_v^\times,R)\arrow{r}{\phi_{v,\mathfrak{N}}^*} & H^\bullet(Y(\mathfrak{N}),R).
\end{tikzcd}
\end{equation}
Note that $\langle\cdot\rangle:H^\bullet(O_v^\times,R)\to H^\bullet(Y(\mathfrak{N}),R)$ is a map of graded $R$-algebras since $H^\bullet(-,R)$ is a functor into the category of graded $R$-algebras.

\begin{defn}\label{derived:hecke:operator:definition}
For $\alpha\in H^\bullet(O_v^\times,R)$, 
let us define $h_{1,\alpha}\in\mathrm{End}_RH^\bullet(Y(\mathfrak{N}),R)$ by
\begin{equation*}
h_{1,\alpha}c:=\langle\alpha\rangle\cup c\quad\text{for}\quad c\in H^\bullet(Y(\mathfrak{N}),R),
\end{equation*} 
where $\cup$ is the cup product on the graded $R$-algebra $H^\bullet(Y(\mathfrak{N}),R)$. 
$h_{1,\alpha}$ is called {\it derived Hecke operator}.
\end{defn}
Note that the map $H^\bullet(O_v^\times,R)\to\mathrm{End}_RH^\bullet(Y(\mathfrak{N}),R)$ 
defined by $\alpha\mapsto h_{1,\alpha}$ is a map of graded $R$-algebras.

\begin{remark}\label{level:dividing:trivial}
We can also define the derived Hecke action for prime ideals $v$ of $F$ dividing $\mathfrak{N}$, but in the case of torus, such action is trivial for positive degree.
\end{remark}

Let $i_{v,\mathfrak{N}}:E(\mathfrak{N})\to\kappa_v^\times$ be the reduction map.
We use the same notation for the induced injection
$E(\mathfrak{N})/E(v\mathfrak{N})\hookrightarrow\kappa_v^\times$.
Let $\psi_{v,\mathfrak{N}}$ be the classifying map of the 
canonical principal $E(\mathfrak{N})/E(v\mathfrak{N})$-bundle
\begin{equation*}
E(v\mathfrak{N})\backslash F_{\mathbb{R},+}^\times/U
\longrightarrow E(\mathfrak{N})\backslash F_{\mathbb{R},+}^\times/U.
\end{equation*}

\begin{prop}\label{computation:hecke:operator}  
Let $\alpha\in H^\bullet(O_v^\times,R)$. Then,
\begin{equation*}
\iota_a^*(h_{1,\alpha}c)=i_{v,\mathfrak{N}}^*(\alpha)\cup\iota_{a}^*(c)
\quad \text{for}\quad c\in H^\bullet(Y(\mathfrak{N}),R).
\end{equation*}
\end{prop}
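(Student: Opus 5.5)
Since $\iota_a^*$ is a map of graded $R$-algebras, we have
\[
\iota_a^*(h_{1,\alpha}c)=\iota_a^*\langle\alpha\rangle\cup\iota_a^*(c),
\]
so the statement reduces to the identity $\iota_a^*\langle\alpha\rangle=i_{v,\mathfrak N}^*(\alpha)$ in $H^\bullet(E(\mathfrak N)\backslash F^\times_{\mathbb R,+}/U,R)$. Here the right side is read via Proposition~\ref{property:torus}: group cohomology of $E(\mathfrak N)$ is identified with the cohomology of this classifying space, and $\alpha$ is regarded as a class on $\kappa_v^\times$ via the inverse of inflation. By the definition \eqref{pullback:homotopy} of $\langle\cdot\rangle$, $\iota_a^*\langle\alpha\rangle=(\phi_{v,\mathfrak N}\circ\iota_a)^*\alpha$. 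It therefore suffices to show that $\phi_{v,\mathfrak N}\circ\iota_a$ is homotopic to the map $\mathcal B(i_{v,\mathfrak N})\colon\mathcal BE(\mathfrak N)\to\mathcal B\kappa_v^\times$ induced by the reduction homomorphism.

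Maps from a CW complex into $\mathcal B\kappa_v^\times$ are classified up to homotopy by principal $\kappa_v^\times$-bundles. The map $\phi_{v,\mathfrak N}\circ\iota_a$ classifies the pullback bundle $\iota_a^*Y(v\mathfrak N)$. The map $\mathcal B(i_{v,\mathfrak N})$ classifies the bundle $\widetilde X\times_{E(\mathfrak N)}\kappa_v^\times$ obtained from the universal cover $\widetilde X=F^\times_{\mathbb R,+}/U$ by extending the structure group along $i_{v,\mathfrak N}$. It is convenient to factor this through $E(\mathfrak N)/E(v\mathfrak N)$, so that the bundle becomes $\bigl(E(v\mathfrak N)\backslash \widetilde X\bigr)\times_{E(\mathfrak N)/E(v\mathfrak N)}\kappa_v^\times$. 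This is the bundle classified by $\mathcal B(i_{v,\mathfrak N})\circ\psi_{v,\mathfrak N}$, and it explains why $\psi_{v,\mathfrak N}$ was introduced.

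The key step is to construct an explicit $\kappa_v^\times$-equivariant isomorphism
\[
\bigl(E(v\mathfrak N)\backslash \widetilde X\bigr)\times_{E(\mathfrak N)/E(v\mathfrak N)}\kappa_v^\times\longrightarrow \pi_{v,\mathfrak N}^{-1}(\operatorname{im}\iota_a),\qquad (x,k)\longmapsto (a_\infty x,\,a^{(\infty)}\tilde k^{-1}),
\]
where $\tilde k\in O_v^\times$ is any lift of $k$, placed in the $v$-component. Three things must be checked: the map is well defined modulo $F^\times$ and $K(v\mathfrak N)$, it is equivariant for the action $k\cdot(x,a)=(x,ak^{-1})$, and it lies over $\iota_a$.

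Well-definedness rests on the following computation. For $e\in E(\mathfrak N)$, the element $(a_\infty e x, a^{(\infty)})$ equals $(a_\infty x, a^{(\infty)}e^{-1})$ in $Y(v\mathfrak N)$, via the diagonal $F^\times$-action. Moreover, $e^{-1}$ lies in $K(\mathfrak N)$ and agrees with $K(v\mathfrak N)$ away from $v$, so modulo $K(v\mathfrak N)$ it acts exactly through its reduction $i_{v,\mathfrak N}(e)^{-1}$ in $\kappa_v^\times$. This matches the twisting in the associated bundle, up to a sign or inverse convention that must be tracked carefully. The resulting bijectivity onto the fibre-preimage is then a fibrewise count, since both sides are principal $\kappa_v^\times$-bundles over $\operatorname{im}\iota_a$, which is homeomorphic to the base.

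The main obstacle is this bookkeeping of conventions. One has to get the left versus right actions and the inverses consistent. One also has to verify that, when $a^{(\infty)}$ is nontrivial at $v$, the $v$-component still causes no trouble; this holds because $a$ commutes with everything and only $O_v^\times/(1+\varpi O_v)$ matters. Once the bundle isomorphism is established, the homotopy uniqueness of classifying maps gives $\iota_a^*\langle\alpha\rangle=i_{v,\mathfrak N}^*(\alpha)$, and the cup product reduction above finishes the proof.
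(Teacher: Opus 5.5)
Your proposal is correct and follows the paper's proof. Both reduce, via multiplicativity of $\iota_a^*$, to the identity $\iota_a^*\langle\alpha\rangle=i_{v,\mathfrak N}^*(\alpha)$, and both obtain it from the homotopy-commutative square of classifying maps through $\mathcal B(E(\mathfrak N)/E(v\mathfrak N))$ coming from a map of principal bundles. Your explicit isomorphism $(x,k)\mapsto(a_\infty x,a^{(\infty)}\tilde k^{-1})$ supplies the details the paper defers to Kim--Kwon, and the conventions are consistent, since the diagonal $F^\times$-action produces exactly the relation $(ex,k)\sim(x,i_{v,\mathfrak N}(e)k)$.
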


\begin{proof}
We give a brief proof; see \cite[Proposition 3.4]{KimKwon2024} for details.
The classifying maps give the following homotopy-commutative diagram:
\begin{equation}\label{cohomology:commutative:diagram}
\begin{tikzcd}
E(\mathfrak{N})\backslash F_{\mathbb{R},+}^\times/U
\arrow{r}{=}\arrow{d}{\simeq}
&E(\mathfrak{N})\backslash F_{\mathbb{R},+}^\times/U
\arrow{r}{\iota_a}\arrow{d}{\psi_{v,\mathfrak{N}}}
&Y(\mathfrak{N})\arrow{d}{\phi_{v,\mathfrak{N}}}\\
\mathcal{B}E(\mathfrak{N})\arrow{r}
&\mathcal{B}\frac{E(\mathfrak{N})}{E(v\mathfrak{N})}\arrow{r}{\mathcal{B}i_{v,\mathfrak{N}}}
&\mathcal{B}\kappa_v^\times.
\end{tikzcd}
\end{equation}
The bottom left arrow is induced by the canonical projection.
Each square follows from the map of bundles.
By Proposition~\ref{property:torus}, we identify the cohomology of
$E(\mathfrak{N})\backslash F_{\mathbb{R},+}^\times/U$ with $H^\bullet(E(\mathfrak{N}),R)$.
Applying $H^\bullet(-,R)$ to the diagram~\eqref{cohomology:commutative:diagram}, we obtain that
$\iota_a^*(\langle\alpha\rangle)=i_{v,\mathfrak{N}}^*(\alpha)$ under the identification.
Then, we are done since $\iota_a^*$ is a map of cohomology rings.
\end{proof}

By Proposition~\ref{computation:hecke:operator} and equation~\eqref{eq:hecke}, 
we obtain that
\begin{equation}\label{eq:comm}
h_{1,\alpha}h_{z,1}=h_{z,1}h_{1,\alpha}
\end{equation} 
for any $z\in F\times_v/O_v^\times$ and $\alpha\in H^\bullet(O_w^\times,R)$
for primes $v$ and $w$.

For $m\geq 0$ and $\underline{\alpha}=(\alpha_1,\ldots,\alpha_m)$ 
with $\alpha_j\in H^\bullet(O_{v_j}^\times,R)$, put
\[
h_{1,\underline{\alpha}}:=h_{1,\alpha_1}\cdots h_{1,\alpha_m},\quad
I_{\mathfrak{N}}(\underline{\alpha}):=
i_{v_1,\mathfrak{N}}^*(\alpha_1)\cup\cdots\cup i_{v_m,\mathfrak{N}}^*(\alpha_m).
\]
When $m=0$, we define $h_{1,\underline{\alpha}}$ and $I_{\mathfrak{N}}(\underline{\alpha})$ 
to be the identity operator and the identity element, respectively.

\begin{prop}\label{computation:hecke:operator:product}
Let $\underline{\alpha}$ be as above.
Then,
\begin{equation*}
\iota_a^*(h_{1,\underline{\alpha}}c)=I_{\mathfrak{N}}(\underline{\alpha})\cup\iota_{a}^*(c)
\quad \text{for} \quad c\in H^\bullet(Y(\mathfrak{N}),R).
\end{equation*}
\end{prop}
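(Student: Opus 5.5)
We argue by induction on $m$, using Proposition~\ref{computation:hecke:operator} as the inductive step.

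For $m=0$, the operator $h_{1,\underline{\alpha}}$ is the identity and $I_{\mathfrak{N}}(\underline{\alpha})$ is the unit of $H^\bullet(E(\mathfrak{N}),R)$. Both sides therefore equal $\iota_a^*(c)$, and there is nothing to prove.

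For $m\geq 1$, write $\underline{\alpha}'=(\alpha_2,\ldots,\alpha_m)$. Then $h_{1,\underline{\alpha}}c=h_{1,\alpha_1}(h_{1,\underline{\alpha}'}c)$. Apply Proposition~\ref{computation:hecke:operator} with $v=v_1$ to the class $c'=h_{1,\underline{\alpha}'}c\in H^\bullet(Y(\mathfrak{N}),R)$. This gives
\[
\iota_a^*(h_{1,\underline{\alpha}}c)=i_{v_1,\mathfrak{N}}^*(\alpha_1)\cup\iota_a^*(h_{1,\underline{\alpha}'}c).
\]
By the induction hypothesis, $\iota_a^*(h_{1,\underline{\alpha}'}c)=I_{\mathfrak{N}}(\underline{\alpha}')\cup\iota_a^*(c)$. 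Associativity of the cup product in $H^\bullet(E(\mathfrak{N}),R)$ then gives
\[
i_{v_1,\mathfrak{N}}^*(\alpha_1)\cup I_{\mathfrak{N}}(\underline{\alpha}')=I_{\mathfrak{N}}(\underline{\alpha}),
\]
which is exactly the claimed formula.

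Two points need care, though neither is a real obstacle. First, the order of the factors must match. The operators compose as $h_{1,\alpha_1}\cdots h_{1,\alpha_m}$, so $h_{1,\alpha_m}$ acts first and $h_{1,\alpha_1}$ last. The corresponding classes appear left to right as $i_{v_1,\mathfrak{N}}^*(\alpha_1)\cup\cdots\cup i_{v_m,\mathfrak{N}}^*(\alpha_m)$. Peeling off the outermost operator $h_{1,\alpha_1}$ puts its class on the far left, consistent with the definition of $I_{\mathfrak{N}}(\underline{\alpha})$, so no graded-commutativity signs arise.

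Second, the primes $v_j$ need not be distinct. Proposition~\ref{computation:hecke:operator} holds for every prime $v$ coprime to $\mathfrak{N}$ with $N(v)$ invertible in $R$, and it is applied at each step to an arbitrary class. Repetition of primes therefore causes no difficulty.
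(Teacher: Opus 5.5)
Your proposal is correct and matches the paper's argument: the paper treats $m=0$ as immediate and, for $m\geq 1$, applies Proposition~\ref{computation:hecke:operator} repeatedly. Your induction, which peels off $h_{1,\alpha_1}$ and then uses associativity of the cup product, is that same argument written out, and your remarks on the order of the factors and on repeated primes are accurate.
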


\begin{proof}
The case $m=0$ is immediate. For $m\geq 1$, use Proposition~\ref{computation:hecke:operator} repeatedly.
\end{proof}

\subsection{Derived Hecke algebra}
Let $v$ be a prime ideal of $F$ whose absolute norm is invertible in $R$.
Let us recall that the abstract derived Hecke algebra at $v$ with coefficients in $R$ is given by
\begin{equation*}
\mathcal{H}_{v,R}^\bullet=\mathrm{Ext}^\bullet_{C_{\rm c}(F_v^\times,R)}(R[F_v^\times/K(\mathfrak{N})_v],R[F_v^\times/K(\mathfrak{N})_v]).
\end{equation*}
Note that definition of $\mathcal{H}_{v,R}^0$ for general primes $v$ is in~\eqref{abstract:classcial:Hecke:algebra:def}.
There is an explicit description on the action of abstract derived Hecke algebras $\mathcal{H}^\bullet_{v,R}$ on the cohomology ring $H^\bullet(Y(\mathfrak{N}),R)$. 
We can easily check that $\mathcal{H}_{v,R}^j$ acts trivially on $H^\bullet(Y(\mathfrak{N}),R)$ if $j>0$ and $v\mid \mathfrak{N}$, as mentioned in Remark \ref{level:dividing:trivial}. Therefore, let us assume that $v$ is coprime to $\mathfrak{N}$.
By Venkatesh \cite[\textsection 2.4, (25)]{venkatesh2016derived}, we have the following isomorphism of $R$-modules:
\begin{equation}\label{explicit:description}
R[F_v^\times/O_v^\times]\otimes_R H^\bullet(O_v^\times,R)\cong \mathcal{H}^\bullet_{v,R}.
\end{equation}
Let us define an action of $\mathcal{H}^\bullet_{v,R}$ on $c\in H^\bullet(Y(\mathfrak{N}),R)$ by 
$(z,\alpha)\cdot c:= h_{1,\alpha}h_{z,1}c$, where $(z,\alpha)\in\mathcal{H}^\bullet_{v,R}$ is the image of $z\otimes\alpha\in R[F_v^\times/O_v^\times]\otimes_R H^\bullet(O_v^\times,R)$ under the isomorphism (\ref{explicit:description}).
Let us write the representation of the given action as follows: 
\begin{equation*}
\mathcal{H}^\bullet_{v,R}\to\mathrm{End}_R H^\bullet(Y(\mathfrak{N}),R),\ (z,\alpha)\mapsto h_{1,\alpha}h_{z,1},
\end{equation*}
which is clearly a map of graded $R$-algebras.
Let us denote by $\mathbb{T}_{v,R}^\bullet$ the image of $\mathcal{H}_{v,R}^\bullet$ in $\mathrm{End}_{R}H^\bullet(Y(\mathfrak{N}),R)$, which is a graded $R$-algebra.
Note that any pair of operators in $\mathbb{T}_{v,R}^\bullet$ for any distincts pair of primes $v$ graded commute in $\mathrm{End}_{R}H^\bullet(Y(\mathfrak{N}),R)$ by Venkatesh \cite[\textsection 2.10, Remark]{venkatesh2016derived}. 
From this, we may define the algebra generated by $\mathbb{T}_{v,R}^\bullet$ for primes $v$ of $F$. 
Let $T_0$ be the set of primes $v$ of $F$ such that $N(v)$ is invertible in $R$ and $v$ is coprime to $\mathfrak N$.

\begin{defn}[Derived Hecke algebra, {Venkatesh \cite[\textsection 2]{venkatesh2016derived}}]\label{definition:derived:hecke:algebra}
Let us denote by $\mathbb{T}_R^\bullet$ the graded $R$-subalgebra of $\mathrm{End}_{R}H^\bullet(Y(\mathfrak{N}),R)$
generated by $\mathbb{T}_{v,R}^\bullet$ for all $v\in T_0$
and $\mathbb T_{v,R}^0$ for all primes $v$ of $F$ coprime to $\mathfrak N$.
We call $\mathbb{T}_R^\bullet$ {\it the derived Hecke algebra} of $Y(\mathfrak{N})$ with coefficients in $R$.
\end{defn} 

\begin{remark}
In the torus case, adding classical Hecke operators at primes dividing $\mathfrak N$
does not enlarge $\mathbb T_R^0$.
Indeed, translation by an idele induces an action on cohomology depending only on
its narrow ray class, and every narrow ray class is represented by a prime coprime to
$\mathfrak N$ by the Chebotarev density theorem.
\end{remark}

For $a\in\mathbb{A}_F^\times$, define $1_a$ to be the characteristic function on $Y(\mathfrak{N})$ 
supported on $\mathrm{Im}(\iota_a)$. By Lemma \ref{representative:choice}, 
$1_a$ depends only on the ray class of modulus $\mathfrak{N}$ represented by $a$.
We may choose a complete representative set 
\begin{equation*}
C(\mathfrak{N})\subset\mathbb{A}_F^{(\infty),\times}
\end{equation*}
of $\mathrm{Cl}_F^+(\mathfrak{N})$ such that $1\in C(\mathfrak{N})$, 
where $1$ is the identity element of $\mathbb{A}_F^{(\infty),\times}$. 
By abusing the notation, for $a \in C(\mathfrak N)$ we let $1_a$ stand for $1_{(1,a)}$.
Then, $1_a$'s generate $H^0(Y(\mathfrak{N}),R)$ for $a\in C(\mathfrak{N})$.
Under the isomorphism $H^0(E(\mathfrak{N}),R)\cong R$, $\iota_b^*(1_a)=\delta_{ab}$ for $a,b\in C(\mathfrak{N})$.

Let $a$ be an element of $C(\mathfrak{N})$ and $z$ an element of 
$\mathbb A_F^\times/\widehat{\mathcal O}_F^\times$.
Let $z(a)$ be the element of $C(\mathfrak{N})$ such that 
$z(a)=za$ in $\mathrm{Cl}_F^+(\mathfrak{N})$. 
The operator $a\mapsto z(a)$ on $C(\mathfrak{N})$ defines the following permutation representation:
\begin{equation*}
\mathrm{Cl}_F^+(\mathfrak{N})\rightarrow\mathrm{Perm}\big(C(\mathfrak{N})\big),
\quad z\mapsto \big(a\mapsto z(a)\big).
\end{equation*}
Let us denote
\begin{equation*}
h_{z,1}:=\prod_{v\nmid\mathfrak N}h_{z_v,1}\in\mathbb{T}^0_R,
\end{equation*}
which is well defined as $\mathbb T_R^0$ is a commutative $R$-algebra
and $h_{z_v,1}$ is the identity operator for all but finitely many $v$.
Based on the fact that $1_a$ is a characteristic function, we obtain that
\begin{equation}\label{hecke:action:representation}
h_{z,1}1_a=1_{z^{-1}(a)}.
\end{equation}

Let us compute the $R$-module structure of $\mathbb{T}^0_R$ 
and the $\mathbb{T}^0_R$-module structure of $H^0(Y(\mathfrak{N}),R)$ and $\mathbb{T}^\bullet_R$:

\begin{prop}\label{Hecke:module:structure}
The following are true:
\begin{itemize}
\item[(1)] $H^0(Y(\mathfrak N),R)=\mathbb T_R^01_1$.
\item[(2)] $\mathbb{T}^0_R=\sum_{z\in \mathbb A_F^\times/\widehat{\mathcal O}_F^\times}Rh_{z,1}$.
\item[(3)] 
$
\mathbb{T}_R^\bullet
=\sum_{m\geq 0}\sum_{\underline{\alpha}}
\mathbb{T}_R^0 h_{1,\underline{\alpha}},
$
where $\underline{\alpha}$ runs over the $m$-copy of
\[
\bigsqcup_{v\in T_0}H^\bullet(O_v^\times,R).
\]
\end{itemize}
\end{prop}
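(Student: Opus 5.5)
I would prove the three items in the order (2), (1), (3), since (1) follows quickly from (2) and the permutation formula \eqref{hecke:action:representation}.

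\textbf{Item (2).} By Definition~\ref{definition:derived:hecke:algebra}, $\mathbb T^0_R$ is the $R$-subalgebra generated by the $\mathbb T^0_{v,R}$ for $v\nmid\mathfrak N$. Via \eqref{explicit:description} in degree zero, each $\mathbb T^0_{v,R}$ is the $R$-span of the operators $h_{z_v,1}$ with $z_v\in F_v^\times/O_v^\times$.
\begin{itemize}
\item The map $z_v\mapsto h_{z_v,1}$ is a group homomorphism, and operators at different primes commute.
\item Hence a product of finitely many such operators is $h_{z,1}$ for the idele class $z$ with those components, trivial components elsewhere, and trivial archimedean part.
\item So $\mathbb T^0_R$ is the $R$-span of the $h_{z,1}$ with $z$ supported away from $\mathfrak N$ and $\infty$.
\end{itemize}
It remains to check that an arbitrary $z\in\mathbb A_F^\times/\widehat{\mathcal O}_F^\times$ gives an operator already in this span. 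The action of $h_{z,1}$ on $H^\bullet$ depends only on the narrow ray class of $z$: by \eqref{eq:hecke}, $\iota_a^*h_{z,1}=\iota_{za}^*$, and Lemma~\ref{representative:choice} shows this depends only on the class $[za]$. By the Chebotarev remark, every ray class is represented by an idele supported at a single prime $v\nmid\mathfrak N$. So the two spans coincide.

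\textbf{Item (1).} By \eqref{hecke:action:representation}, $h_{z,1}1_1=1_{z^{-1}(1)}$. The classes $z^{-1}(1)$ run through all of $C(\mathfrak N)$ as $z$ ranges over representatives of $\mathrm{Cl}_F^+(\mathfrak N)$. Since the $1_a$ with $a\in C(\mathfrak N)$ span $H^0(Y(\mathfrak N),R)$, combining with (2) gives $H^0=\mathbb T^0_R1_1$. The inclusion $\mathbb T^0_R1_1\subseteq H^0$ is clear because degree-zero operators preserve degree.

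\textbf{Item (3).} The algebra $\mathbb T^\bullet_R$ is generated by the $h_{z,1}$ and by the elements $h_{1,\alpha}h_{z,1}$ for $v\in T_0$. Both kinds lie in the right-hand side, using $\mathcal H^\bullet_{v,R}\to\mathrm{End}$ and \eqref{explicit:description}. So it suffices to show the right-hand side is an $R$-subalgebra containing these generators.
\begin{itemize}
\item It contains $1$ (take $m=0$).
\item It is closed under multiplication, because by \eqref{eq:comm} every $h_{1,\alpha}$ commutes with every $h_{z,1}$. A product of words in these generators can therefore be rearranged as $t\cdot h_{1,\underline\alpha}$ with $t\in\mathbb T^0_R$ and $\underline\alpha$ the concatenated tuple.
\item $R$-linearity of $\alpha\mapsto h_{1,\alpha}$ handles sums.
\end{itemize}

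The only genuinely delicate point I expect is in (2): justifying that the operator of a general idele, including components at primes dividing $\mathfrak N$ and archimedean components, is realized by operators at unramified primes. I would handle this exactly via the class-dependence argument and Chebotarev described above.
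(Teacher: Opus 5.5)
Your proposal is correct and follows the paper's proof: (1) comes from the permutation formula \eqref{hecke:action:representation}, (2) from the fact that translation depends only on the ray class, via \eqref{eq:hecke} and Lemma~\ref{representative:choice}, and (3) from \eqref{eq:comm}, which moves all degree-zero operators to the left (you phrase this equivalently as showing the right-hand side is a subalgebra containing the generators). One small remark: with the paper's definition $h_{z,1}:=\prod_{v\nmid\mathfrak N}h_{z_v,1}$, the components of $z$ at $v\mid\mathfrak N$ and at $\infty$ are simply discarded, so (2) is nearly immediate, and the Chebotarev (or weak approximation) input is really what (1) needs, namely that ideles supported away from $\mathfrak N\infty$ reach every class in $\mathrm{Cl}_F^+(\mathfrak N)$.
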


\begin{proof}
The claim (1) follows from equation~\eqref{hecke:action:representation} for $a=1$
and the fact that $H^0(Y(\mathfrak N),R)$ is generated by $1_a$'s.
(2) follows from Lemma~\ref{representative:choice} and equation~\eqref{eq:hecke}.
For (3), $\mathbb{T}_R^\bullet$ is spanned over $R$ by finite products of operators $h_{1,\alpha}h_{z,1}$. 
Then equation~\eqref{eq:comm} allows us to move all degree-zero operators to the left. 
Their product belongs to $\mathbb{T}_R^0$, which proves (3).
\end{proof}

\begin{prop}\label{hecke:action:faithful}
The following map is injective:
\[
\mathbb T^\bullet_R\longrightarrow\operatorname{Hom}_R\left(H^0(Y(\mathfrak N),R),H^\bullet(Y(\mathfrak N),R)\right),
\quad
h\longmapsto h\vert_{H^0(Y(\mathfrak N),R)}.
\]
\end{prop}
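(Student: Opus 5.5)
We must show that if $h\in\mathbb T^\bullet_R$ annihilates $H^0(Y(\mathfrak N),R)$, then $h=0$ on all of $H^\bullet(Y(\mathfrak N),R)$. The plan is to show that every $h\in\mathbb T^\bullet_R$ acts on each class $c$ by an explicit formula involving only the values of $h$ on the classes $1_b$, $b\in C(\mathfrak N)$. Since the maps $\iota_a$ identify $Y(\mathfrak N)$ with a disjoint union (Proposition~\ref{torus:copy}), a class $c$ is determined by the family $(\iota_a^*c)_{a\in C(\mathfrak N)}$, each component lying in $H^\bullet(E(\mathfrak N),R)$. So it suffices to compute $\iota_a^*(hc)$ in terms of the $\iota_b^*c$.

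\textbf{Step 1.} By Proposition~\ref{Hecke:module:structure}(2),(3), $h$ is a finite sum
\[
h=\sum_{k} r_k\, h_{z_k,1}h_{1,\underline{\alpha}_k},\qquad r_k\in R,
\]
with $z_k\in\mathbb A_F^\times/\widehat{\mathcal O}_F^\times$. Using equation~\eqref{eq:hecke}, iterated over the local components of $z_k$, we have $\iota_a^*(h_{z,1}c)=\iota_{z(a)}^*(c)$ up to a change of representative, and Lemma~\ref{representative:choice} makes this harmless. Combining this with Proposition~\ref{computation:hecke:operator:product} gives
\[
\iota_a^*(hc)=\sum_k r_k\, I_{\mathfrak N}(\underline{\alpha}_k)\cup \iota_{z_k(a)}^*(c).
\]
Grouping terms by the class $b=z_k(a)\in C(\mathfrak N)$, this becomes
\[
\iota_a^*(hc)=\sum_{b\in C(\mathfrak N)} \beta_{a,b}\cup\iota_b^*(c),\qquad \beta_{a,b}:=\sum_{k:\,z_k(a)=b} r_k\,I_{\mathfrak N}(\underline{\alpha}_k)\in H^\bullet(E(\mathfrak N),R).
\]

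\textbf{Step 2.} Apply this formula to $c=1_b$. Since $\iota_{b'}^*(1_b)=\delta_{bb'}$, we get $\iota_a^*(h1_b)=\beta_{a,b}$. If $h$ vanishes on $H^0$, then $h1_b=0$ for every $b$, so all $\beta_{a,b}=0$. Step 1 then gives $\iota_a^*(hc)=0$ for every $a$ and every $c$, hence $hc=0$ by Proposition~\ref{torus:copy}. This proves injectivity.

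\textbf{Main obstacle.} The hardest part is Step 1, specifically getting the ordering and the sign conventions right. Writing $h_{z,1}$ to the left of $h_{1,\underline\alpha}$ means $\iota_a^*$ first shifts the index and then applies the cup product, so one must check that the class $I_{\mathfrak N}(\underline\alpha)$ does not depend on $a$. This is exactly the content of Proposition~\ref{computation:hecke:operator}. A second point to check is that the coefficient $\beta_{a,b}$ is well defined: it depends only on the ray class of $z_k a$, which Lemma~\ref{representative:choice} guarantees.
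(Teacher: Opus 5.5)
Your proposal is correct and follows essentially the paper's own argument: you write $h$ as an $R$-combination of operators $h_{z,1}h_{1,\underline\alpha}$, derive $\iota_a^*(hc)=\sum_{b}\iota_a^*(h1_b)\cup\iota_b^*(c)$ (your $\beta_{a,b}$ is exactly $\iota_a^*(h1_b)$), and conclude from the vanishing on the classes $1_b$ via Proposition~\ref{torus:copy}. The one point you gloss over, as the paper also does, is that $\iota_{za}^*=\iota_{z(a)}^*$ needs slightly more than Lemma~\ref{representative:choice}: the two embeddings differ by translation by a totally positive element of $F^\times$, which is homotopic to the identity because $F^\times_{\mathbb R,+}$ is connected.
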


\begin{proof}
Suppose that the restiction of $h\in\mathbb T_R^\bullet$ is zero.
Choose $a\in C(\mathfrak N)$ and $c\in H^\bullet(Y(\mathfrak N),R)$.
For $h=h_{z,1}h_{1,\underline\alpha}$, 
Proposition~\ref{computation:hecke:operator:product}
and equations~\eqref{eq:hecke},~\eqref{eq:comm},~\eqref{hecke:action:representation} give the equations
\[
\iota_a^*(hc)=I_{\mathfrak N}(\underline\alpha)\cup\iota_{z(a)}^*(c),
\quad
\iota_a^*(h1_b)=\delta_{z(a),b}I_{\mathfrak N}(\underline\alpha).
\]
From this, we obtain
\begin{equation}\label{component:formula}
\iota_a^*(hc)
=\sum_{b\in C(\mathfrak N)}\iota_a^*(h1_b)\cup\iota_b^*(c).
\end{equation}
By Proposition~\ref{Hecke:module:structure},
every $h\in\mathbb T_R^\bullet$ is an $R$-linear combination of $h_{z,1}h_{1,\underline\alpha}$.
Therefore, the formula~\eqref{component:formula} extends linearly to every $h\in\mathbb T_R^\bullet$.
From our assumption and Proposition~\ref{torus:copy}, we deduce that $hc=0$. 
As $h\in\mathbb T_R^\bullet$, we obtain that $h=0$.
\end{proof}

\begin{prop}\label{hecke:zero:free}
The map
\[
\mathbb T_R^0\to H^0(Y(\mathfrak N),R),\quad h\mapsto h1_1
\] 
is an isomorphism of $\mathbb T_R^0$-modules.
\end{prop}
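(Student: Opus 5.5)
The plan is to prove surjectivity and injectivity separately. $\mathbb T_R^0$-linearity is automatic, since $h\mapsto h1_1$ is evaluation at a fixed element and $h'(h1_1)=(h'h)1_1$.

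\emph{Surjectivity.} This is Proposition~\ref{Hecke:module:structure}(1), $H^0(Y(\mathfrak N),R)=\mathbb T_R^01_1$. Concretely, by equation~\eqref{hecke:action:representation}, for each $a\in C(\mathfrak N)$ we may pick $z$ with $z^{-1}(1)=a$; such a $z$ exists because $\mathrm{Cl}_F^+(\mathfrak N)$ acts simply transitively on $C(\mathfrak N)$. Then $h_{z,1}1_1=1_a$, and the $1_a$ span $H^0$.

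\emph{Injectivity.} Suppose $h\in\mathbb T_R^0$ satisfies $h1_1=0$. By Proposition~\ref{hecke:action:faithful} (in degree zero), it suffices to show $h1_b=0$ for every $b\in C(\mathfrak N)$. The key input is that $\mathbb T_R^0$ is commutative; this follows from Proposition~\ref{Hecke:module:structure}(2), since the operators $h_{z,1}$ come from the group homomorphism $z\mapsto h_{z,1}$ of an abelian group. Choose $z$ with $h_{z,1}1_1=1_b$, as above. Then
\[
h1_b=h\,h_{z,1}1_1=h_{z,1}\,h1_1=0 .
\]
Hence $h$ vanishes on $H^0(Y(\mathfrak N),R)$, so $h\vert_{H^0}=0$, and Proposition~\ref{hecke:action:faithful} gives $h=0$.

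One could instead argue directly from Proposition~\ref{Hecke:module:structure}(2). Writing $h=\sum_{g}r_g h_{g,1}$ with $g$ running over $\mathrm{Cl}_F^+(\mathfrak N)$ (using that $h_{z,1}$ depends only on the class of $z$), we get $h1_1=\sum_g r_g1_{g^{-1}(1)}$. Since the $1_a$ form an $R$-basis of $H^0$ (via $\iota_b^*(1_a)=\delta_{ab}$ and Proposition~\ref{torus:copy}), $h1_1=0$ forces all $r_g=0$. This route needs the well-definedness step carefully, but the commutativity argument avoids it.

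The only mildly delicate point is that step. The faithfulness of Proposition~\ref{hecke:action:faithful} is exactly what converts ``$h$ kills $1_1$'' into ``$h=0$'', so the main task is just to confirm that $h$ kills all of $H^0$. Commutativity of $\mathbb T^0_R$, together with transitivity of the class group action on the components, gives this with no further obstacle.
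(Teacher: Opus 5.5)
Your proposal is correct and follows essentially the same route as the paper. Surjectivity is Proposition~\ref{Hecke:module:structure}(1). For injectivity, you use the commutativity of $\mathbb T_R^0$, together with $h_{z,1}1_1=1_b$, to upgrade $h1_1=0$ to $h\vert_{H^0(Y(\mathfrak N),R)}=0$, and then Proposition~\ref{hecke:action:faithful} gives $h=0$.

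Your alternative argument via Proposition~\ref{Hecke:module:structure}(2) and the basis $\{1_a\}$ is also valid, provided you note that $h_{z,1}$ depends only on the narrow ray class of $z$. That route would bypass the faithfulness proposition entirely.
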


\begin{proof}
The map of the statement is surjective 
by Proposition~\ref{Hecke:module:structure} (1). Choose $h\in\mathbb T_R^0$ and assume that $h1_1=0$.
By Proposition~\ref{Hecke:module:structure} and the commutativity of $\mathbb T^0_R$, 
we have $hc=0$ for any $c\in H^0(Y(\mathfrak N),R)$.
By Proposition~\ref{hecke:action:faithful}, we get $h=0$. 
Hence the map $h\mapsto h1_1$ is injective. So we are done.
\end{proof}

From these results, we obtain the property of $\Psi_R^\bullet$:

\begin{prop}\label{injectivity:psi}
The map $\Psi_R^\bullet$ defined in \eqref{eq:psi} is injective.
\end{prop}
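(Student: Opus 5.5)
The plan is to reduce injectivity of $\Psi_R^\bullet$ to the faithfulness statement of Proposition~\ref{hecke:action:faithful}, after first simplifying the source of $\Psi_R^\bullet$ with Proposition~\ref{hecke:zero:free}. Since $H^0(Y(\mathfrak N),R)\cong\mathbb T_R^0$ via $h\mapsto h1_1$, we have a canonical isomorphism of graded $\mathbb T_R^\bullet$-modules
\[
\mathbb T_R^\bullet\otimes_{\mathbb T_R^0}H^0(Y(\mathfrak N),R)\cong\mathbb T_R^\bullet\otimes_{\mathbb T_R^0}\mathbb T_R^0\cong\mathbb T_R^\bullet,
\quad h\otimes h'1_1\mapsto hh'.
\]
Under this identification, $\Psi_R^\bullet$ becomes the evaluation map $\mathbb T_R^\bullet\to H^\bullet(Y(\mathfrak N),R)$ given by $h\mapsto h1_1$. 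Hence it suffices to show that $h1_1=0$ forces $h=0$.

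So suppose $h\in\mathbb T_R^\bullet$ satisfies $h1_1=0$. We want to deduce that $h$ kills all of $H^0(Y(\mathfrak N),R)$. By Proposition~\ref{Hecke:module:structure}(1), every $c\in H^0(Y(\mathfrak N),R)$ has the form $c=t1_1$ with $t\in\mathbb T_R^0$. We therefore need $hc=h t1_1=t h1_1=0$, which requires $h$ and $t$ to commute.

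This commutation is the one point needing care; it is mild. By Proposition~\ref{Hecke:module:structure}(3), $h$ is an $R$-linear combination of products $h_{z,1}h_{1,\underline\alpha}$. The operator $t$ is a combination of the $h_{z',1}$, by part (2) of the same proposition. Each $h_{z',1}$ commutes with the $h_{z,1}$, since $\mathbb T^0_R$ is commutative, and with the $h_{1,\alpha}$, by \eqref{eq:comm}. Degree-zero elements commute on the nose even in the graded-commutative setting, so no signs arise, and $th=ht$ as required.

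Consequently $h$ restricts to zero on $H^0(Y(\mathfrak N),R)$. Proposition~\ref{hecke:action:faithful} then gives $h=0$, which proves that $\Psi_R^\bullet$ is injective. The proof is essentially formal; the only potential obstacle is confirming that the identification of the tensor product with $\mathbb T_R^\bullet$ is compatible with $\Psi_R^\bullet$, and this is immediate from the formula $h\otimes h'1_1\mapsto hh'1_1$.
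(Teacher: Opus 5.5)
Your proposal is correct and follows the paper's proof essentially step for step. You identify the source with $\mathbb T_R^\bullet$ via Proposition~\ref{hecke:zero:free}, use Proposition~\ref{Hecke:module:structure} and \eqref{eq:comm} to pass from $h1_1=0$ to $h|_{H^0(Y(\mathfrak N),R)}=0$, and conclude with Proposition~\ref{hecke:action:faithful}; your explicit check that degree-zero operators commute with $h$ is what the paper compresses into a one-line citation.
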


\begin{proof}
By Proposition~\ref{hecke:zero:free}, the domain of $\Psi_R^\bullet$ can be identified with $\mathbb T_R^\bullet$.
Under this identification, the map $\Psi_R^\bullet$ can be identified with the map $h\mapsto h1_1$.
Suppose that $h1_1=0$. By using Proposition~\ref{Hecke:module:structure} and~\eqref{eq:comm}, 
we obtain that $hc=0$ for each $c\in H^0(Y(\mathfrak N),R)$. 
By Proposition~\ref{hecke:action:faithful}, we are done.
\end{proof}

\section{Non-vanishing mod {\it p} of derived Hecke algebras}\label{section:nonvanishing}
Let $p$ be a rational prime. We take $R$ to be $\mathbb{F}_p$, the finite field with $p$ elements.
Then, $T_0$ becomes the set of primes $v$ of $F$ coprime to $p\mathfrak{N}$.
Note that $H^j(O_v^\times,\mathbb{F}_p)\cong H^j(\kappa_v^\times,\mathbb{F}_p)$ if $v\in T_0$. 
Define $T_1\subset T_0$ to be the set of primes $v$ in $T_0$ such that $p\mid N(v)-1$.
We can observe from the Chebotarev density theorem that $T_1$ has a positive proportion in the set of primes of $F$. 
For $j\geq 0$, we put
\begin{equation*}
\mathbb{T}^j:=\mathbb{T}_{\mathbb{F}_p}^j\subset\mathrm{End}_{\mathbb{F}_p}H^\bullet(Y(\mathfrak{N}),\mathbb{F}_p).
\end{equation*}

Recall the map $\Psi^\bullet$ is the map $\Psi^\bullet_R$ for $R=\mathbb F_p$.
Now we prove that $\Psi^\bullet$ is a $\mathbb{T}^0$-linear isomorphism.
Define $d_p$ to be the $\mathbb{F}_p$-dimension of the graded subalgebra of 
$H^\bullet(E(\mathfrak{N}),\mathbb{F}_p)$ generated by the images 
$i_{v,\mathfrak{N}}^*H^\bullet(\kappa_v^\times,\mathbb{F}_p)$ for all $v\in T_0$.
Note that $H^{\bullet}(\kappa_v^\times,\mathbb{F}_p)=H^0(\kappa_v^\times,\mathbb{F}_p)$ if $v\in T_0\setminus T_1$.
Let $h_F^+(\mathfrak{N})$ be the cardinality of $\mathrm{Cl}_F^+(\mathfrak{N})$.
We next compute the dimension of the image of $\Psi^\bullet$:

\begin{lem}\label{dimension:prop}
The image of $\Psi^\bullet$ has $\mathbb{F}_p$-dimension $h_F^+(\mathfrak{N})\cdot d_p$.
\end{lem}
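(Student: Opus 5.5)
By Proposition~\ref{hecke:zero:free}, the image of $\Psi^\bullet$ equals $\mathbb T^\bullet 1_1$. Our plan is to compute this image component by component. We use the decomposition of Proposition~\ref{torus:copy}: the map
\[
\Phi:=(\iota_a^*)_{a\in C(\mathfrak N)}\colon H^\bullet(Y(\mathfrak N),\mathbb F_p)\longrightarrow\bigoplus_{a\in C(\mathfrak N)}H^\bullet(E(\mathfrak N),\mathbb F_p)
\]
is an isomorphism of $\mathbb F_p$-vector spaces. This identification uses Proposition~\ref{property:torus}. It therefore suffices to show that $\Phi(\mathbb T^\bullet 1_1)$ is exactly $\bigoplus_{a}A$, where $A\subset H^\bullet(E(\mathfrak N),\mathbb F_p)$ is the graded subalgebra generated by the $i_{v,\mathfrak N}^*H^\bullet(\kappa_v^\times,\mathbb F_p)$ for $v\in T_0$. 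By definition $\dim A=d_p$, and there are $h_F^+(\mathfrak N)$ summands, which would give the claimed dimension $h_F^+(\mathfrak N)\cdot d_p$.

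For the inclusion $\subseteq$, we use Proposition~\ref{Hecke:module:structure}(2),(3). Together they show that $\mathbb T^\bullet$ is spanned over $\mathbb F_p$ by the operators $h_{z,1}h_{1,\underline\alpha}$ with $\underline\alpha$ a tuple of classes at primes in $T_0$. For such an operator, the formula established in the proof of Proposition~\ref{hecke:action:faithful} gives
\[
\iota_a^*(h_{z,1}h_{1,\underline\alpha}1_1)=\delta_{z(a),1}\,I_{\mathfrak N}(\underline\alpha).
\]
This also follows directly from Proposition~\ref{computation:hecke:operator:product} and \eqref{eq:comm}, \eqref{hecke:action:representation}. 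Since $I_{\mathfrak N}(\underline\alpha)\in A$, every component of $\Phi(h1_1)$ lies in $A$.

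For the inclusion $\supseteq$, fix $b\in C(\mathfrak N)$ and an element $x\in A$. We may take $x$ to be a product $I_{\mathfrak N}(\underline\alpha)$, since $A$ is spanned by such products; the empty product $m=0$ gives the unit. Next we choose $z$ representing the class $b^{-1}$, so that $z(b)=1$. This is possible because $h_{z,1}$ is defined for every $z\in\mathbb A_F^\times/\widehat{\mathcal O}_F^\times$, and these surject onto $\mathrm{Cl}_F^+(\mathfrak N)$. Then $z(a)=1$ holds if and only if $a=b$. Hence $\Phi(h_{z,1}h_{1,\underline\alpha}1_1)$ is $x$ placed in the $b$-th summand and zero elsewhere. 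Taking linear combinations over all $b$ and all such $x$ produces all of $\bigoplus_b A$.

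We expect the main obstacle to be bookkeeping rather than any deep step. First, one must make sure that the permutation $a\mapsto z(a)$ is simply transitive, which holds because it is given by translation in the group $\mathrm{Cl}_F^+(\mathfrak N)$. Second, one must check that the identification of $\iota_a^*$ with group cohomology of $E(\mathfrak N)$ is the same for every $a$, so that $A$ really is the same subspace in each summand. Both points follow from Proposition~\ref{computation:hecke:operator} and the diagram \eqref{cohomology:commutative:diagram}, which is valid uniformly in $a$.
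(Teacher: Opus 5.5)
Your proof is correct and is essentially the paper's argument. The paper likewise writes $\mathbb T^\bullet 1_1=\sum_{a}\sum_{\underline\alpha}\mathbb F_p\, h_{1,\underline\alpha}1_a$, uses that classes supported on distinct components are independent, and identifies each component's contribution with the span of the $I_{\mathfrak N}(\underline\alpha)$ via $\iota_b^*(h_{1,\underline\alpha}1_a)=\delta_{a,b}I_{\mathfrak N}(\underline\alpha)$. Your use of the isomorphism $\Phi$ with two inclusions, where $h_{z,1}1_1=1_b$ plays the role of $\mathbb T^0 1_1=H^0$, is only a more explicit bookkeeping of the same steps.
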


\begin{proof} 
By Proposition~\ref{Hecke:module:structure} (1), 
the image $\operatorname{Im}(\Psi^\bullet)$ coincides with $\mathbb{T}^\bullet 1_1$.
By Proposition \ref{Hecke:module:structure} and equation~\eqref{eq:comm},
\[
\mathbb{T}^\bullet 1_1
=\sum_{m\geq 0}\sum_{\underline\alpha}\mathbb{T}^0 h_{1,\underline\alpha}1_1
=\sum_{a\in C(\mathfrak{N})}\sum_{m}\sum_{\underline\alpha}\mathbb{F}_p h_{1,\underline\alpha}1_a,
\]
where $\underline\alpha$ runs over the $m$-copy of
$\bigsqcup_{v\in T_1}H^\bullet(\kappa_v^\times,\mathbb{F}_p)$.
The summands are linearly disjoint for $a\in C(\mathfrak N)$,
as $h_{1,\underline\alpha}1_a$'s are cohomology classes supported on the distinct connected components.
Therefore, we obtain from the above equation that
\begin{equation*}
\dim_{\mathbb{F}_p}\operatorname{Im}(\Psi^\bullet)
=h_F^+(\mathfrak{N})\cdot\dim_{\mathbb{F}_p}\sum_{m\geq 0}
\sum_{\underline\alpha}\mathbb{F}_p h_{1,\underline\alpha}1_1.
\end{equation*}
By Proposition~\ref{computation:hecke:operator:product}, we have
\begin{equation*}
\iota_b^*(h_{1,\underline{\alpha}}1_a)=\delta_{a,b}I_{\mathfrak{N}}(\underline{\alpha}).
\end{equation*}
Thus under the injection $\iota_1^*$ (Proposition~\ref{torus:copy}), the space on the right maps to
\begin{equation*}
\sum_{m\geq0}\sum_{\underline{\alpha}}\mathbb{F}_p I_{\mathfrak{N}}(\underline{\alpha})
\subset H^\bullet(E(\mathfrak{N}),\mathbb{F}_p)
\end{equation*}
by Proposition~\ref{computation:hecke:operator:product}. Its dimension is $d_p$ by definition. So we are done.
\end{proof}

Let $r_p$ denote the $\mathbb F_p$-dimension of $\operatorname{Hom}(\mathcal{O}_F^\times, \mathbb F_p)$. Note that $r_p=r$ unless $F$ contains a primitive $p$-th root of unity, in which case $r_p=r+1$.
Denote by $\delta_p$ the $\mathbb F_p$-dimension of $\mathrm{Hom}(\mathcal{O}_F^\times/E(\mathfrak{N}),\mathbb{F}_p)$.
Define $t_p\geq 0$ to be the dimension of the following subspace
\begin{equation*}
\sum_{v\in T_1}i_{v,\mathfrak{N}}^*\mathrm{Hom}(\kappa_v^\times,\mathbb{F}_p)\subset\mathrm{Hom}(E(\mathfrak{N}),\mathbb{F}_p).
\end{equation*}

\begin{lem}\label{dimension:tp:dp}
 $2^{t_p}\leq d_p\leq 2^r$.
\end{lem}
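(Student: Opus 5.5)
The plan is to identify the subalgebra defining $d_p$ explicitly inside $H^\bullet(E(\mathfrak N),\mathbb F_p)$, and in fact to show $d_p=2^{t_p}$. Both inequalities then follow from $t_p\le r$.

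\textbf{Step 1: the ambient algebra.} Since $E(\mathfrak N)$ is torsion free of finite index in $\mathcal O_F^\times$, it is free abelian of rank $r$. By Proposition~\ref{property:torus} and the subsequent discussion, $E(\mathfrak N)\backslash F_{\mathbb R,+}^\times/U$ is homotopy equivalent to an $r$-torus. Hence $H^\bullet(E(\mathfrak N),\mathbb F_p)\cong\Lambda^\bullet_{\mathbb F_p}\operatorname{Hom}(E(\mathfrak N),\mathbb F_p)$ is an exterior algebra on the $r$-dimensional space $H^1$, of total dimension $2^r$. Any subalgebra therefore has dimension at most $2^r$, which gives the upper bound. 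Moreover, the subalgebra generated by a subspace $W\subset H^1$ is $\Lambda^\bullet W$, of dimension $2^{\dim W}$.

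\textbf{Step 2: only degree one matters.} For $v\in T_0\setminus T_1$ the image is only $H^0$, i.e.\ scalars. For $v\in T_1$, the group $\kappa_v^\times$ is cyclic of order divisible by $p$, and its cohomology ring is generated by two classes:
\begin{itemize}
\item the degree-one class $y$, which spans $\operatorname{Hom}(\kappa_v^\times,\mathbb F_p)$;
\item the degree-two class $x=\beta(y)$, where $\beta$ is the Bockstein attached to $0\to\mathbb Z/p\to\mathbb Z/p^2\to\mathbb Z/p\to0$.
\end{itemize}
For $p=2$ one has $y^2=\beta(y)$. Since $i_{v,\mathfrak N}^*$ is a ring map commuting with Bockstein, $i_{v,\mathfrak N}^*(x)=\beta(i_{v,\mathfrak N}^*y)$. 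This vanishes: every class in $\operatorname{Hom}(E(\mathfrak N),\mathbb F_p)$ lifts to $\operatorname{Hom}(E(\mathfrak N),\mathbb Z/p^2)$ because $E(\mathfrak N)$ is free. Thus the images of all positive-degree classes are generated, as an algebra, by the degree-one images $i_{v,\mathfrak N}^*\operatorname{Hom}(\kappa_v^\times,\mathbb F_p)$ for $v\in T_1$.

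\textbf{Step 3: conclusion.} The subalgebra in the definition of $d_p$ equals $\Lambda^\bullet W$, where $W=\sum_{v\in T_1}i_{v,\mathfrak N}^*\operatorname{Hom}(\kappa_v^\times,\mathbb F_p)$ has dimension $t_p$. Hence $d_p=2^{t_p}$. Since $W\subset H^1$ forces $t_p\le r$, we get $2^{t_p}\le d_p\le 2^r$.

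The main obstacle is Step 2, namely making sure the degree-two generators contribute nothing new. I handle it through the Bockstein argument and the freeness of $E(\mathfrak N)$. If the lower bound alone were wanted, it would suffice to note that $\Lambda^\bullet W$ is contained in the subalgebra.
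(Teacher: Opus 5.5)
Your Step 1 together with your closing remark already proves the lemma, and it is exactly the paper's argument. $H^\bullet(E(\mathfrak N),\mathbb F_p)$ is an exterior algebra of total dimension $2^r$, which bounds $d_p$ from above. The exterior algebra $\Lambda^\bullet W$, of dimension $2^{t_p}$, sits inside the subalgebra defining $d_p$, which gives the lower bound. Your main line instead goes through the stronger equality $d_p=2^{t_p}$, and Step 2 as written contains an error.

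The error is in your description of $H^\bullet(\kappa_v^\times,\mathbb F_p)$. When $p^2\mid N(v)-1$, the class $y$ lifts to $\operatorname{Hom}(\kappa_v^\times,\mathbb Z/p^2)$. Hence the mod-$p^2$ Bockstein $\beta(y)$ is \emph{zero} in $H^2(\kappa_v^\times,\mathbb F_p)$. So $y$ and $\beta(y)$ do not generate the ring, and your argument never touches the actual degree-two polynomial generator. That generator is $\beta_k(\tilde y)$, where:
\begin{itemize}
\item $p^k$ exactly divides $N(v)-1$;
\item $\tilde y\colon\kappa_v^\times\to\mathbb Z/p^k$ is a surjection;
\item $\beta_k$ is the connecting map of $0\to\mathbb Z/p\to\mathbb Z/p^{k+1}\to\mathbb Z/p^k\to 0$.
\end{itemize}
Equivalently, it is the mod $p$ reduction of a generator of $H^2(\kappa_v^\times,\mathbb Z)$. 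Primes with $N(v)\equiv 1 \bmod p^2$ form a positive-density subset of $T_1$, so this case cannot be set aside.

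The repair is your own freeness argument one level up: $\tilde y\circ i_{v,\mathfrak N}$ lifts to $\mathbb Z/p^{k+1}$, so $i_{v,\mathfrak N}^*\beta_k(\tilde y)=0$. A cleaner route avoids the ring structure of $H^\bullet(\kappa_v^\times,\mathbb F_p)$ entirely. Since $E(\mathfrak N)$ is free and $\kappa_v^\times$ is cyclic, $i_{v,\mathfrak N}$ lifts through a surjection $\mathbb Z\to\kappa_v^\times$. Hence $i_{v,\mathfrak N}^*$ factors through $H^\bullet(\mathbb Z,\mathbb F_p)$, which vanishes above degree one.

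With this fix you obtain $d_p=2^{t_p}$, which is sharper than the lemma. The paper needs only the inequalities: in Theorem~\ref{modp:nonvanishing} it uses just that $t_p=r$ forces $d_p=2^r$.
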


\begin{proof}
Since $E(\mathfrak{N})$ is free abelian of rank $r$, 
its cohomology ring is isomorphic to the exterior algebra on $H^1(E(\mathfrak{N}),\mathbb{F}_p)$ 
and has dimension $2^r$.
The subspace defining $t_p$ generates an exterior subalgebra of dimension $2^{t_p}$, 
contained in the graded subalgebra defining $d_p$. So we are done.
\end{proof}

\begin{lem}\label{dimension:lemma}
There is a finite subset $S\subset T_1$ such that the space
\[
\sum_{v\in S}i_{v,\mathfrak N}^*\mathrm{Hom}(\kappa_v^\times,\mathbb{F}_p)
\]
is equal to the space defining $t_p$ and has dimension $r_p-\delta_p$.
\end{lem}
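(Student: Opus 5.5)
The plan is to realise the space defining $t_p$ as the image of a restriction map whose dimension can be computed by class field theory. Finiteness of $S$ is the easy part: the sum lives in the finite-dimensional space $\mathrm{Hom}(E(\mathfrak N),\mathbb F_p)$, of dimension $r$. Hence it is already spanned by the contributions of finitely many $v\in T_1$, and any such finite set can be taken as $S$. The real content is the dimension count $r_p-\delta_p$.

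First, I identify the image of each $i_{v,\mathfrak N}^*$. For $v\in T_1$, the group $\kappa_v^\times$ is cyclic of order divisible by $p$, so $\mathrm{Hom}(\kappa_v^\times,\mathbb F_p)$ is one-dimensional, spanned by the $p$-th power residue character $\chi_v$. Its pullback to $E(\mathfrak N)$ is $\varepsilon\mapsto\chi_v(\varepsilon \bmod v)$. The map $i_{v,\mathfrak N}$ factors through $\mathcal O_F^\times\to\kappa_v^\times$, so each $i^*_{v,\mathfrak N}\chi_v$ is the restriction to $E(\mathfrak N)$ of the character $\tilde\chi_v\in\mathrm{Hom}(\mathcal O_F^\times,\mathbb F_p)$ obtained in the same way. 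The space $W$ defining $t_p$ is therefore the image under the restriction map
\[
\mathrm{res}\colon\mathrm{Hom}(\mathcal O_F^\times,\mathbb F_p)\to\mathrm{Hom}(E(\mathfrak N),\mathbb F_p)
\]
of the span $V:=\sum_{v\in T_1}\mathbb F_p\tilde\chi_v$.

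Next, I claim $V=\mathrm{Hom}(\mathcal O_F^\times,\mathbb F_p)$, which has dimension $r_p$. This is the main obstacle. By duality it suffices to show that if $\varepsilon\in\mathcal O_F^\times$ satisfies $\tilde\chi_v(\varepsilon)=0$ for all $v\in T_1$, then $\varepsilon\in(\mathcal O_F^\times)^p$. The hypothesis says $\varepsilon$ is a $p$-th power modulo every $v\in T_1$. Apply Chebotarev to the Kummer extension $F(\zeta_p,\varepsilon^{1/p})/F$: every prime of $F(\zeta_p)$ of degree one over $F$ away from $p\mathfrak N$ splits in $F(\zeta_p,\varepsilon^{1/p})$. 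Hence $\varepsilon^{1/p}\in F(\zeta_p)$ by Bauer-type arguments (density of split primes). Kummer theory over $F(\zeta_p)$ then forces $\varepsilon\in (F(\zeta_p)^\times)^p$. Since $[F(\zeta_p):F]$ is prime to $p$, a norm/descent argument gives $\varepsilon\in(F^\times)^p\cap\mathcal O_F^\times=(\mathcal O_F^\times)^p$. This is the step where care is needed, including the case $\zeta_p\in F$; there the torsion $\zeta_p$ itself must also be detected, and the same argument does so.

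Finally, $W=\mathrm{res}(V)=\mathrm{res}(\mathrm{Hom}(\mathcal O_F^\times,\mathbb F_p))$. Applying $\mathrm{Hom}(-,\mathbb F_p)$ to $1\to E(\mathfrak N)\to\mathcal O_F^\times\to\mathcal O_F^\times/E(\mathfrak N)\to1$ gives an exact sequence
\[
0\to\mathrm{Hom}(\mathcal O_F^\times/E(\mathfrak N),\mathbb F_p)\to\mathrm{Hom}(\mathcal O_F^\times,\mathbb F_p)\xrightarrow{\mathrm{res}}\mathrm{Hom}(E(\mathfrak N),\mathbb F_p).
\]
Hence $\dim W=r_p-\delta_p$. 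Choosing a finite $S\subset T_1$ whose $\tilde\chi_v$ span $V$, which is possible since $V$ is finite-dimensional, gives the statement.
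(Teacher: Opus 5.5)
Your proof is correct and has the same architecture as the paper's. Both factor each $i_{v,\mathfrak N}^*$ through $\mathrm{Hom}(\mathcal O_F^\times,\mathbb F_p)$, show that the characters from $T_1$ span $\mathrm{Hom}(\mathcal O_F^\times,\mathbb F_p)$, and read off the dimension $r_p-\delta_p$ from the restriction exact sequence.

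The only difference is in the spanning step. The paper cites \cite[Lemma 4.6]{KimKwon2024}, a Grunwald--Wang consequence giving the stronger isomorphism $\prod_{v\in S}\mathrm{Hom}(\kappa_v^\times,\mathbb F_p)\cong\mathrm{Hom}(\mathcal O_F^\times,\mathbb F_p)$. You instead prove the needed surjectivity directly, via Chebotarev for $F(\zeta_p,\varepsilon^{1/p})/F(\zeta_p)$ and descent using $p\nmid[F(\zeta_p):F]$. That argument is valid, and it correctly handles the torsion when $\zeta_p\in F$.
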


\begin{proof}
Denote by $i_v\colon \mathcal{O}_F^\times \to \kappa_v^\times$ the reduction map.
Then $i_{v,\mathfrak N}$ is the restriction of $i_{v}$.
Thus, for a finite subset $T$ of $T_1$, the map
\begin{equation*}
\prod_{v\in T}\mathrm{Hom}(\kappa_v^\times,\mathbb{F}_p)
\longrightarrow\mathrm{Hom}(E(\mathfrak{N}),\mathbb{F}_p),
\quad
(\phi_v)_{v\in T}\longmapsto
\sum_{v\in T}i_{v,\mathfrak N}^*(\phi_v)
\end{equation*}
factors through $\mathrm{Hom}(\mathcal{O}_F^\times,\mathbb{F}_p)$ 
via the map $(\phi_v)_{v\in T}\mapsto\sum_{v\in T}i_v^*(\phi_v)$, 
followed by restriction to $E(\mathfrak N)$.
By \cite[Lemma 4.6]{KimKwon2024}, which is an isomorphism theorem 
induced by Grunwald--Wang theorem, there is a finite subset
$S\subset T_1$ such that the map
$(\phi_v)_{v\in S}\mapsto\sum_{v\in S}i_v^*(\phi_v)$
is an isomorphism onto
$\mathrm{Hom}(\mathcal{O}_F^\times,\mathbb{F}_p)$.
Using the exact sequence
\begin{equation*}
\begin{tikzcd}
0 \arrow{r} & \mathrm{Hom}(\mathcal{O}_F^\times/E(\mathfrak{N}),\mathbb{F}_p) \arrow{r} &  \mathrm{Hom}(\mathcal{O}_F^\times,\mathbb{F}_p) \arrow{r} & \mathrm{Hom}(E(\mathfrak{N}),\mathbb{F}_p),
\end{tikzcd}
\end{equation*}
the assertion of the lemma follows.
\end{proof}

\begin{remark}
Since $\mathcal{O}_F^\times/E(\mathfrak{N})$ is a finite abelian group, $\delta_p>0$ if and only if $p$ divides $\left |\mathcal{O}_F^\times/E(\mathfrak{N})\right |$. In particular, when $r>0$ and $p$ does not divide $\left | \mathcal{O}_F^\times/E(\mathfrak{N}) \right|$, we have $r_p-\delta_p >0$. 
\end{remark}

Now we can prove the following theorems:
\begin{thm}\label{spectral:degeneracy}
Let $\lambda$ be a Hecke eigensystem on $H^0(Y(\mathfrak{N}),\overline{\mathbb{F}}_p)$.
For every integer $1\leq j\leq r_p-\delta_p$, the restriction 
\[
\mathbb T^j\otimes_{\mathbb T^0} H^0(Y(\mathfrak{N}),\overline{\mathbb{F}}_p)[\lambda]
\longrightarrow
H^j(Y(\mathfrak{N}),\overline{\mathbb{F}}_p)[\lambda],\quad h\otimes c\longmapsto hc
\]
of $\Psi^j$ is well-defined and non-vanishing, where $H^i[\lambda]$'s are the $\lambda$-eigenspaces.
\end{thm}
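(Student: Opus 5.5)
We have to prove two things: that the restricted map is well defined, and that it is nonzero in each degree $1\le j\le r_p-\delta_p$.

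\emph{Well-definedness.} Since $\mathbb T^\bullet$ is graded commutative over $\mathbb T^0$ (by \cite[Proposition 3.6]{KimKwon2024}, or directly from \eqref{eq:comm}), every $h\in\mathbb T^j$ commutes with $\mathbb T^0$. Hence $h$ carries the $\lambda$-eigenspace of $H^0$ into the $\lambda$-eigenspace of $H^j$. We work after extending scalars to $\overline{\mathbb F}_p$, so that $\mathbb T^0\otimes\overline{\mathbb F}_p$ acts on $H^0(Y(\mathfrak N),\overline{\mathbb F}_p)$.

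\emph{The $\lambda$-eigenspace of $H^0$.} By Proposition~\ref{hecke:zero:free} this space is the regular representation of the finite abelian group algebra $\overline{\mathbb F}_p[\mathrm{Cl}_F^+(\mathfrak N)]$. By Proposition~\ref{Hecke:module:structure}(2) and \eqref{hecke:action:representation}, $\mathbb T^0$ is the image of this group algebra. An eigensystem $\lambda$ is therefore a character $\chi$ of $\mathrm{Cl}_F^+(\mathfrak N)$. Even if $p$ divides $h_F^+(\mathfrak N)$, the vector
\[
c_\lambda:=\sum_{a\in C(\mathfrak N)}\chi(a)\,1_a
\]
is a nonzero element of the eigenspace, after choosing the correct sign convention for $\chi$ versus $\chi^{-1}$.

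\emph{Nonvanishing.} By Lemma~\ref{dimension:lemma}, choose a finite set $S\subset T_1$ such that the classes $i_{v,\mathfrak N}^*\phi$, for $v\in S$ and $\phi\in\mathrm{Hom}(\kappa_v^\times,\mathbb F_p)$, span a subspace of $\mathrm{Hom}(E(\mathfrak N),\mathbb F_p)=H^1(E(\mathfrak N),\mathbb F_p)$ of dimension $r_p-\delta_p$. Pick $j$ linearly independent classes $\beta_1,\dots,\beta_j$ of this form; this is possible since $j\le r_p-\delta_p$. Write $\beta_k=i_{v_k,\mathfrak N}^*(\alpha_k)$ with $\alpha_k\in H^1(\kappa_{v_k}^\times,\mathbb F_p)$, and set $\underline\alpha=(\alpha_1,\dots,\alpha_j)$. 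Then $h_{1,\underline\alpha}\in\mathbb T^j$. By Proposition~\ref{computation:hecke:operator:product},
\[
\iota_a^*\bigl(h_{1,\underline\alpha}c_\lambda\bigr)=\chi(a)\,\beta_1\cup\cdots\cup\beta_j .
\]
As recalled in the proof of Lemma~\ref{dimension:tp:dp}, $H^\bullet(E(\mathfrak N),\mathbb F_p)$ is the exterior algebra on $H^1$, since $E(\mathfrak N)\cong\mathbb Z^r$. Hence the wedge of $j$ linearly independent degree-one classes is nonzero. Taking $a=1$, where $\chi(1)=1$, gives $\iota_1^*(h_{1,\underline\alpha}c_\lambda)\ne0$. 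So $h_{1,\underline\alpha}\otimes c_\lambda$ maps to a nonzero class in $H^j(Y(\mathfrak N),\overline{\mathbb F}_p)[\lambda]$.

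\emph{Main obstacle.} The delicate points are the bookkeeping of the eigenspace over $\overline{\mathbb F}_p$ and getting the sign convention for $\chi$ right. The hypothesis $p\mid N(v)-1$ for $v\in T_1$ is what guarantees that $H^1(\kappa_v^\times,\mathbb F_p)$ is nonzero, and Lemma~\ref{dimension:lemma} supplies enough independent degree-one classes.
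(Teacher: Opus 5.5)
Your proof is correct and follows essentially the paper's route. Well-definedness comes from $\mathbb T^0$ commuting with $\mathbb T^j$. Lemma~\ref{dimension:lemma} supplies $j$ independent classes $i_{v_k,\mathfrak N}^*(\alpha_k)$, whose cup product is nonzero in the exterior algebra $H^\bullet(E(\mathfrak N),\mathbb F_p)$. Proposition~\ref{computation:hecke:operator:product} then detects $h_{1,\underline\alpha}c\neq0$ on a single component.

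The only difference is in the choice of eigenvector. You build an explicit one, $c_\lambda=\sum_a\chi(a)1_a$; no inversion is needed here, since with $\chi(z)=\lambda(h_{z,1})$ one gets $h_{z,1}c_\lambda=\chi(z)c_\lambda$ directly from \eqref{hecke:action:representation}. The paper instead takes an arbitrary eigenvector $c$ and a component $a$ with $\iota_a^*(c)\neq0$. That avoids the character bookkeeping and shows $h_{1,\underline\alpha}c\neq0$ for every nonzero $c\in H^0(Y(\mathfrak N),\overline{\mathbb F}_p)[\lambda]$.
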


\begin{proof}
We consider the derived Hecke action with coefficients in $\overline{\mathbb{F}}_p$.
The result will be equal to the base change of the derived Hecke action with $\mathbb F_p$-coefficients, as explained in \cite[\textsection 2.12]{venkatesh2016derived}.
Also note that the natural map induces an isomorphism $H^j(Y(\mathfrak{N}),\overline{\mathbb{F}}_p)\simeq H^j(Y(\mathfrak{N}),\mathbb{F}_p)\otimes_{\mathbb{F}_p}\overline{\mathbb{F}}_p$, because $\mathbb F_p \to \overline{\mathbb{F}}_p$ is flat.

Let $\lambda:\mathbb{T}^0\rightarrow\overline{\mathbb{F}}_p$ be a Hecke eigensystem on 
$H^0(Y(\mathfrak{N}),\overline{\mathbb{F}}_p)$ and $c\in H^0(Y(\mathfrak{N}),\overline{\mathbb{F}}_p)$ 
an eigenvector for the given Hecke eigensystem. 
Note that $c$ can be written as a linear combination of $1_a$'s over $\overline{\mathbb{F}}_p$. 
Let $1\leq j\leq r_p-\delta_p$.
By Lemma~\ref{dimension:lemma}, we can choose $v_1,\ldots,v_j\in T_1$ and 
$\alpha_i\in H^1(\kappa_{v_i}^\times,\mathbb{F}_p)$ such that
$i_{v_1,\mathfrak{N}}^*(\alpha_1),\ldots,i_{v_j,\mathfrak{N}}^*(\alpha_j)$
are linearly independent in $H^1(E(\mathfrak{N}),\mathbb{F}_p)$.
Since $E(\mathfrak{N})$ is free, its cohomology ring is isomorphic to the exterior algebra on 
$H^1(E(\mathfrak{N}),\mathbb{F}_p)$. Thus, for $\underline{\alpha}=(\alpha_1,\ldots,\alpha_j)$,
$I_{\mathfrak{N}}(\underline{\alpha})\neq 0.$
Choose $a\in C(\mathfrak{N})$ with $\iota_a^*(c)\neq 0$.
By Proposition~\ref{computation:hecke:operator:product},
\begin{equation*}
\iota_a^*(h_{1,\underline{\alpha}}c)
=I_{\mathfrak{N}}(\underline{\alpha})\cup\iota_a^*(c)\neq 0.
\end{equation*}
Hence $h_{1,\underline{\alpha}}c$ is a non-zero class in $H^j(Y(\mathfrak{N}),\overline{\mathbb{F}}_p)$.
Then for any $h_0\in\mathbb{T}^0$, Proposition~\ref{Hecke:module:structure} and equation~\eqref{eq:comm} give
$h_0h_{1,\underline{\alpha}}c
=h_{1,\underline{\alpha}}h_0c
=\lambda(h_0)h_{1,\underline{\alpha}}c$.
So we are done.
\end{proof}

\begin{thm}\label{modp:nonvanishing}
The dimension $t_p$ is equal to $r_p-\delta_p$. 
In particular, the map $\Psi^\bullet$ is an isomorphism if and only if $\delta_p=r_p-r$.
\end{thm}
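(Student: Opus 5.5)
The first assertion, $t_p=r_p-\delta_p$, should follow directly from Lemma~\ref{dimension:lemma}. That lemma gives a finite set $S\subset T_1$ such that $\sum_{v\in S}i_{v,\mathfrak N}^*\mathrm{Hom}(\kappa_v^\times,\mathbb F_p)$ equals the full space $\sum_{v\in T_1}i_{v,\mathfrak N}^*\mathrm{Hom}(\kappa_v^\times,\mathbb F_p)$ and has dimension $r_p-\delta_p$. Since $t_p$ is by definition the dimension of that full space, we get $t_p=r_p-\delta_p$. If one prefers not to rely on the equality clause of the lemma, I will argue directly. Every $i_{v,\mathfrak N}^*$ factors through restriction $\mathrm{Hom}(\mathcal O_F^\times,\mathbb F_p)\to\mathrm{Hom}(E(\mathfrak N),\mathbb F_p)$, so the space defining $t_p$ lies in the image of that restriction map. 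By the displayed exact sequence this image has dimension $r_p-\delta_p$, and the part coming from $S$ already fills it.

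For the equivalence, Proposition~\ref{injectivity:psi} shows that $\Psi^\bullet$ is injective, so it is an isomorphism if and only if $\dim\operatorname{Im}\Psi^\bullet=\dim H^\bullet(Y(\mathfrak N),\mathbb F_p)$. By Proposition~\ref{torus:copy} and Proposition~\ref{property:torus}, $Y(\mathfrak N)$ is a disjoint union of $h_F^+(\mathfrak N)$ copies of a $K(E(\mathfrak N),1)$ with $E(\mathfrak N)\cong\mathbb Z^r$. Hence the total cohomology has dimension $h_F^+(\mathfrak N)\cdot 2^r$. By Lemma~\ref{dimension:prop}, $\Psi^\bullet$ is therefore an isomorphism if and only if $d_p=2^r$.

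It remains to show $d_p=2^r\iff t_p=r$, and then to use $t_p=r_p-\delta_p$. The key observation is that for $v\in T_1$, $\kappa_v^\times$ is cyclic of order divisible by $p$. The ring $H^\bullet(\kappa_v^\times,\mathbb F_p)$ is generated by a degree-one class $\alpha$ and a degree-two class $\beta$, where $\beta$ is the Bockstein of $\alpha$, or $\alpha^2$ up to a unit when $p=2$. Pulling back to $E(\mathfrak N)\cong\mathbb Z^r$, the class $i_{v,\mathfrak N}^*\beta$ is the Bockstein of $i_{v,\mathfrak N}^*\alpha$, and this vanishes because every mod $p$ class of $\mathbb Z^r$ lifts to an integral class; for $p=2$ one uses instead that squares of degree-one classes vanish in the exterior algebra. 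Consequently the subalgebra defining $d_p$ is exactly the exterior subalgebra generated by the $t_p$-dimensional degree-one space, so $d_p=2^{t_p}$. This sharpens Lemma~\ref{dimension:tp:dp}.

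Thus $\Psi^\bullet$ is an isomorphism if and only if $t_p=r$, that is, $r_p-\delta_p=r$, that is, $\delta_p=r_p-r$. The main obstacle is the vanishing of the degree-two pullbacks. I will handle it uniformly by noting that $H^2(E(\mathfrak N),\mathbb F_p)$ is spanned by cup products of degree-one classes, and that the Bockstein on $H^1(\mathbb Z^r,\mathbb F_p)$ is zero because $H^2(\mathbb Z^r,\mathbb Z)$ is torsion-free.
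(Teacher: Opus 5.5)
Your proof is correct. The first assertion, and the implication that $\delta_p=r_p-r$ forces $\Psi^\bullet$ to be an isomorphism, follow the paper's argument exactly: Lemmas~\ref{dimension:lemma}, \ref{dimension:tp:dp}, \ref{dimension:prop} and Proposition~\ref{injectivity:psi}.

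The converse is where you diverge. The paper assumes $t_p<r$, quotes the degree-one formula $\dim\operatorname{Im}(\Psi^1)=h_F^+(\mathfrak N)\,t_p$ from \cite[Lemma 4.3]{KimKwon2024}, and concludes that already $\Psi^1$ fails to be surjective. You instead prove the exact equality $d_p=2^{t_p}$, by showing the pulled-back degree-two classes vanish on $E(\mathfrak N)\cong\mathbb Z^r$, and then compare total dimensions.

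Your route gives a genuine sharpening of Lemma~\ref{dimension:tp:dp}, namely $\dim\operatorname{Im}\Psi^\bullet=h_F^+(\mathfrak N)\,2^{r_p-\delta_p}$. The price is analysing the higher cohomology of $\kappa_v^\times$, whereas the paper's route needs only degree one. For the theorem itself the sharpening is not needed. The degree-one part of the subalgebra defining $d_p$ is exactly the $t_p$-dimensional space, so $t_p<r$ already forces $d_p<2^r$. That is the paper's converse phrased in terms of $d_p$.

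One correction to your description of $\beta$. It is the mod-$p$ Bockstein of $\alpha$ only when $p^2\nmid N(v)-1$. If $p^2\mid N(v)-1$, that Bockstein vanishes and the degree-two generator is a higher Bockstein. In particular, for $p=2$ and $4\mid N(v)-1$ one has $\alpha^2=0\neq\beta$, so the description ``$\alpha^2$ up to a unit'' fails there.

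The uniform argument is the one you end with. Reduction $H^2(\kappa_v^\times,\mathbb Z)\cong\mathbb Z/(N(v)-1)\to H^2(\kappa_v^\times,\mathbb F_p)$ is surjective, because $H^3(\kappa_v^\times,\mathbb Z)=0$. Hence $i_{v,\mathfrak N}^*\beta$ is the reduction of a torsion class in the torsion-free group $H^2(E(\mathfrak N),\mathbb Z)$, so it is zero. With that fix, $d_p=2^{t_p}$ stands.
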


\begin{proof}
The first assertion of the theorem follows from Lemma~\ref{dimension:lemma}. 

Suppose first that $\delta_p=r_p-r$. Then $t_p=r$, so Lemma~\ref{dimension:tp:dp} gives $d_p=2^r$.
By Lemma~\ref{dimension:prop}, $\Psi^\bullet$ is surjective, hence an isomorphism by Proposition~\ref{injectivity:psi}.

Conversely, suppose that $\delta_p\neq r_p-r$.
Since $t_p\leq r$, the first assertion implies $t_p<r$.
The dimension formula for $\Psi^1$ was proved in \cite[Lemma 4.3]{KimKwon2024} 
by an argument analogous to the proof of Lemma~\ref{dimension:prop}.
It gives
\begin{equation*}
\dim_{\mathbb{F}_p}\operatorname{Im}(\Psi^1)
=h_F^+(\mathfrak{N})\cdot t_p
<h_F^+(\mathfrak{N})\cdot r
=\dim_{\mathbb{F}_p}H^1(Y(\mathfrak{N}),\mathbb{F}_p).
\end{equation*}
Therefore $\Psi^1$ is not surjective.
Since $\Psi^\bullet$ preserves degrees, it cannot be an isomorphism.
\end{proof}

\begin{remark}\label{remark:dimension}
For the condition $\delta_p=r_p-r$ in Theorem\,\ref{modp:nonvanishing} to hold true, 
we have either $\delta_p=r_p-r=1$ or $\delta_p=r_p-r=0$. 
The latter happens exactly when $p$ does not divide $\left|\mathcal{O}_F^\times / E(\mathfrak N)\right|$. 
The former happens exactly when $F$ contains a primitive $p$-th root of unity and $\delta_p=1$. 
\end{remark}

Since localization is exact, Theorem~\ref{modp:nonvanishing} implies:
\begin{cor}\label{cor:4.10}
Let $\mathfrak{m}$ be a maximal ideal of $\mathbb{T}^0$. If $p$ does not divide $|\mathcal{O}_F^\times/E(\mathfrak{N})|$, we have the following isomorphism:
\begin{equation*}
\mathbb{T}^\bullet_\mathfrak{m}\otimes_{\mathbb{T}^0_\mathfrak{m}}H^0(Y(\mathfrak{N}),\mathbb{F}_p)_\mathfrak{m}
\cong H^\bullet(Y(\mathfrak{N}),\mathbb{F}_p)_\mathfrak{m}.
\end{equation*}
\end{cor}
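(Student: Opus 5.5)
The plan is to deduce Corollary~\ref{cor:4.10} directly from Theorem~\ref{modp:nonvanishing}, using that localization at $\mathfrak m$ is exact and commutes with tensor products.

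\emph{Step 1: the global isomorphism.} Since $p\nmid|\mathcal O_F^\times/E(\mathfrak N)|$, the Remark following Lemma~\ref{dimension:lemma} gives $\delta_p=0$. I need $\delta_p=r_p-r$, so I also need $r_p=r$. By Remark~\ref{remark:dimension}, the case $\delta_p=r_p-r=0$ happens exactly when $p$ does not divide $|\mathcal O_F^\times/E(\mathfrak N)|$. Concretely: if $F$ contained a primitive $p$-th root of unity $\zeta_p$, then $\zeta_p$ would be torsion in $\mathcal O_F^\times$, while $E(\mathfrak N)$ is torsion free. Hence $\zeta_p$ would have order $p$ in $\mathcal O_F^\times/E(\mathfrak N)$, contradicting the hypothesis. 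So $r_p=r$, the condition $\delta_p=r_p-r$ holds, and Theorem~\ref{modp:nonvanishing} shows that $\Psi^\bullet$ is an isomorphism of graded $\mathbb T^\bullet$-modules, in particular of $\mathbb T^0$-modules.

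\emph{Step 2: localize.} I regard $\Psi^\bullet$ as a map of $\mathbb T^0$-modules and apply $(-)_\mathfrak m=\mathbb T^0_\mathfrak m\otimes_{\mathbb T^0}(-)$. This functor takes isomorphisms to isomorphisms, which gives
\[
\bigl(\mathbb T^\bullet\otimes_{\mathbb T^0}H^0(Y(\mathfrak N),\mathbb F_p)\bigr)_\mathfrak m\cong H^\bullet(Y(\mathfrak N),\mathbb F_p)_\mathfrak m .
\]
Here $\mathbb T^\bullet$ is a $\mathbb T^0$-module via left multiplication. By \eqref{eq:comm} and Proposition~\ref{Hecke:module:structure}, $\mathbb T^0$ is central in $\mathbb T^\bullet$, so the left and right module structures agree and localizing is unambiguous.

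\emph{Step 3: identify the left side.} I use the standard identity $(M\otimes_A N)_\mathfrak m\cong M_\mathfrak m\otimes_{A_\mathfrak m}N_\mathfrak m$ for modules over a commutative ring $A$, with $A=\mathbb T^0$, which is commutative. This identifies the left side with $\mathbb T^\bullet_\mathfrak m\otimes_{\mathbb T^0_\mathfrak m}H^0(Y(\mathfrak N),\mathbb F_p)_\mathfrak m$, which is the claimed isomorphism. Under this identification the map is still $h\otimes c\mapsto hc$.

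There is no real obstacle in this argument. The only point that needs care is Step 1, namely ruling out the case $r_p=r+1$ by using that $E(\mathfrak N)$ is torsion free.
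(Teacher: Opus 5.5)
Your proposal is correct and is essentially the paper's argument: the paper deduces the corollary in one line from Theorem~\ref{modp:nonvanishing}, using $\delta_p=r_p-r=0$ from Remark~\ref{remark:dimension} and the exactness of localization. You spell out the two points the paper leaves implicit, namely that torsion-freeness of $E(\mathfrak N)$ forces $r_p=r$ under the hypothesis, and that localization commutes with $\otimes_{\mathbb T^0}$ because $\mathbb T^0$ is commutative and central in $\mathbb T^\bullet$.
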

After extending scalars to $\overline{\mathbb F}_p$, each cohomology group is a finite-dimensional
module over the finite-dimensional degree-zero Hecke algebra.
Its localization at the maximal ideal corresponding to $\lambda$ is nonzero
if and only if its $\lambda$-eigenspace is nonzero.

We conclude this section by deducing our main results. Assume $p$ does not divide the order of $\mathcal{O}_F^\times/E(\mathfrak{N})$.
It implies 
\begin{equation}\label{dimension:equation:remark}
\delta_p=r_p-r=0
\end{equation}
due to Remark \ref{remark:dimension}.

\begin{proof}[Proof of Theorem \ref{thm:A}]
Theorem \ref{modp:nonvanishing}, combined with the equation \eqref{dimension:equation:remark}, implies Theorem~\ref{thm:A}.
\end{proof}

\begin{proof}[Proof of Theorem \ref{thm:B}]
Recall that we assumed that $F$ is neither $\mathbb Q$ nor an imaginary quadratic field. In particular, $r_p>0$.
Now Theorem~\ref{spectral:degeneracy} and Corollary~\ref{cor:4.10}, combined with the equation \eqref{dimension:equation:remark}, imply Theorem~\ref{thm:B}. 
\end{proof}

\begin{proof}[Proof of Theorem~\ref{thm:C}]
Theorem~\ref{thm:A}, combined with Proposition~\ref{hecke:zero:free}, implies Theorem~\ref{thm:C}.
\end{proof}

\bibliographystyle{plain}
\bibliography{ref}

\end{document}